\documentclass[a4paper,reqno,oneside,11pt]{amsart} 
\reversemarginpar
\usepackage[utf8]{inputenc}
\usepackage[english]{babel}
\usepackage{hyperref}
\usepackage{amsmath}
\usepackage{amsthm}
\usepackage{amsfonts}
\usepackage{amssymb}
\usepackage{enumerate}
\usepackage{mathrsfs}
\usepackage{xcolor}
\usepackage{latexsym}
\usepackage{dsfont}
\usepackage{bbm}

\numberwithin{equation}{section}
\allowdisplaybreaks

\theoremstyle{plain}
\newtheorem{theorem}{Theorem}[section]
\newtheorem*{thmA}{Theorem A}
\newtheorem{proposition}[theorem]{Proposition}
\newtheorem{lemma}[theorem]{Lemma}
\newtheorem{corollary}[theorem]{Corollary}

\theoremstyle{definition}

\newtheorem{remark}[theorem]{Remark}

% \mathbb
\newcommand{\N}{\mathbb{N}}
\newcommand{\Z}{\mathbb{Z}}

\newcommand{\C}{\mathbb{C}}
\newcommand{\T}{\mathbb{T}}
\newcommand{\D}{\mathbb{D}}
\newcommand{\A}{\mathbb{A}}
\newcommand{\B}{\mathbb{B}}

% Greek
\newcommand{\al}{\alpha}
\newcommand{\be}{\beta}

\newcommand{\om}{\omega}

\newcommand{\si}{\sigma}

\newcommand{\Om}{\Omega}

\renewcommand{\phi}{\varphi}

\newcommand{\w}{\omega}

% \mathcal

\newcommand{\CO}{\mathcal{O}}

% \mathbf
\newcommand{\BC}{\mathbf{C}}

% \mathfrak
\newcommand{\FD}{\mathfrak{D}}

% Comments

% Others

\newcommand{\cl}[1]{{#1}^\text{\rm cl}}

\newcommand{\sm}{\setminus}
\renewcommand{\ss}{\subset}

\newcommand{\norm}[1]{\lVert #1 \rVert}

\newcommand{\wt}[1]{\widetilde{#1}}

\newcommand{\abs}[1]{\lvert #1 \rvert}

\newcommand{\Sp}{\operatorname{Sp}}

\newcommand{\ip}[2]{\left\langle #1,#2 \right\rangle}

\renewcommand{\H}{H}

\newcommand{\E}{E}

% MSC 2020
\makeatletter
\@namedef{subjclassname@2020}{%
  \textup{2020} Mathematics Subject Classification}
\makeatother

%------------------------------------------------------------------------
\begin{document}
%------------------------------------------------------------------------
\title[An operator model in the annulus]{An operator model in the annulus}
%-----------------------
\author[G. Bello]{Glenier Bello}
\address{G. Bello \newline
Institute of Mathematics of the Polish Academy of Sciences\\
00-656 Warszawa\\
ul. \'{S}niadeckich 8\\
Poland}
\email{gbello@impan.pl}

\author[D. V. Yakubovich]{Dmitry Yakubovich}
\address{D. V. Yakubovich \newline
Departamento de Matem\'aticas\\
Universidad Aut\'onoma de Madrid\\
Cantoblanco, 28049 Madrid, Spain\\
and Instituto de Ciencias Matem\'aticas (CSIC-UAM-UC3M-UCM)}
\email {dmitry.yakubovich@uam.es}
%-----------------------
\date{\today}
\subjclass[2020]{47A20, 47A63 (primary), 47A25 (secondary)}
% 47A20 Dilations, extensions, compressions of linear operators
% 47A63 Linear operator inequalities
% 47A30 Norms (inequalities, more that one norm, etc.) of linear operators
\keywords{Dilation, functional model, operator inequality, annulus}
%-----------------------
\begin{abstract}
For an invertible linear operator $T$ on a Hilbert space $\H$, put 
\[
\al(T^*,T) := -T^{*2}T^2 + (1+r^2) T^* T - r^2 I, 
\]
where $I$ stands for the identity operator on $\H$ and $r\in (0,1)$; this expression 
comes from applying Agler's hereditary functional calculus to 
the polynomial $\al(t)=(1-t) (t-r^2)$. 
We give a concrete unitarily equivalent functional model for operators 
satisfying $\al(T^*,T)\ge0$. In particular, we prove that 
the closed annulus $r\le |z|\le 1$ is a complete $K$-spectral set for $T$. 
We explain the relation of the model with the Sz.-Nagy--Foias one and with  
the observability gramian and discuss the relationship of this class with other operator classes related to the annulus. 
%We prove several facts that 
%indicate that the non-negative square root of $\al(T^*,T)$ 
%can be seen as an analogue of Sz.-Nagy and Foias' defect operator. 
\end{abstract}
%-----------------------
\maketitle
%-----------------------
\section{Introduction}
%-----------------------
Given a bounded subset $\Om$ of the complex plane, and a Hilbert space operator $T$ 
with spectrum in $\Om$, the closure $\cl\Om$ of $\Om$ is said to be a 
$K$-spectral set for $T$, for some constant $K\ge 1$, if 
\[
\norm{f(T)}\le K\max_{z\in\cl\Om}\norm{f(z)}
\]
for any rational function $f$ with poles outside $\cl\Om$. 
The notion of spectral sets (i.e.,
$K$-spectral for $K=1$) was introduced by von Neumann in \cite{vNe51}. 
If the same inequality holds for any rational $n\times n$ matrix-valued function $f$ with 
poles outside $\cl\Om$, for any size $n$, then $\cl\Om$ is said to be completely $K$-spectral 
for $T$. 
Arveson \cite{Arv69} proved that $\cl\Om$ is a complete $K$-spectral set 
for $T$ for some $K\ge1$ if and only if $T$ is similar to an operator 
which has a normal dilation with spectrum contained in the boundary of $\Om$. 
The case when $\Om$ is the unit disc $\D$ is deeply related with the 
Sz.-Nagy--Foias theory of Hilbert space contractions. 
An excellent reference for this theory is the book \cite{NFBK10}. 
In the landmark paper \cite{Agler85}, Agler studied the case when $\Om$ is 
an annulus 
\[
\A:=\{r<\abs{z}<1\},
\] 
for some $0<r<1$. He proved that 
$\cl\A$ is $1$-spectral for $T$ if and only if $\cl\A$ is completely $1$-spectral for $T$. 
Moreover, it is well-known that operators for which $\cl\A$ is a $1$-spectral set, 
that we will denote by $\Sp\A$, admit the following model: 

\begin{thmA}[cf.\ {\cite[Theorem~2.1]{Ball79lifting}}]\label{thm-1-spectral}
The set $\cl\A$ is $1$-spectral for $T$ if and only if 
there exist unitary operators $U_1$, $U_2$ and a weight $\w$ on a Hilbert space $\E$ 
such that $T$ is unitarily equivalent to a compression of the operator 
\[
(M_z \text{ on }H^2(\A,\E, \w))\oplus U_1\oplus rU_2
\] 
to its coinvariant subspace. 	
\end{thmA}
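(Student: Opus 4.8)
The plan is to prove the two implications separately, treating the forward (``if'') direction as a routine dilation argument and concentrating the effort on the model construction in the reverse direction.

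For the ``if'' direction, suppose $T$ is unitarily equivalent to $P_\CH A|_\CH$, where $A := (M_z \text{ on } H^2(\A,\E,\w)) \oplus U_1 \oplus rU_2$ and $\CH$ is coinvariant, so that $\CH^\perp$ is $A$-invariant. First I would observe that $A$ is subnormal: the shift $M_z$ on $H^2(\A,\E,\w)$ is the restriction of the normal multiplication $M_z$ on the boundary space $L^2(\prt\A,\E,\w)$ to the invariant subspace $H^2$, and $U_1, rU_2$ are already normal, so $A$ is the restriction to an invariant subspace of the normal operator $\wt{N} := (M_z \text{ on } L^2(\prt\A,\E,\w)) \oplus U_1 \oplus rU_2$, whose spectrum lies in $\prt\A$. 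For any rational $f$ with poles off $\cl\A$, the operator $f(\wt{N})$ is normal and, by the maximum modulus principle, $\norm{f(\wt{N})} = \sup_{z\in\sigma(\wt{N})}\abs{f(z)} \le \max_{z\in\cl\A}\abs{f(z)}$. Since $A$ is a restriction of $\wt{N}$ to an invariant subspace we get $\norm{f(A)}\le\norm{f(\wt{N})}$, and since $T=P_\CH A|_\CH$ with $\CH$ coinvariant we get $f(T)=P_\CH f(A)|_\CH$; hence $\norm{f(T)}\le\max_{z\in\cl\A}\abs{f(z)}$ and $\cl\A$ is $1$-spectral.

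For the ``only if'' direction, assume $\cl\A$ is $1$-spectral for $T$. By Agler's theorem \cite{Agler85}, for the annulus $1$-spectral and completely $1$-spectral coincide, so Arveson's theorem \cite{Arv69} provides a normal $\prt\A$-dilation: a normal operator $N$ on a space $K\supseteq\H$ with $\sigma(N)\subseteq\prt\A$ and $f(T)=P_\H f(N)|_\H$ for every rational $f$ with poles off $\cl\A$, which I would take minimal. Because $\prt\A=\T\cup r\T$ is a disjoint union of two circles, the spectral measure of $N$ splits $K=K_1\oplus K_2$ and $N=N_1\oplus N_2$, where $N_1$ is unitary ($\sigma(N_1)\subseteq\T$) and $N_2=rV$ with $V$ unitary ($\sigma(V)\subseteq\T$). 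It remains to convert this abstract dilation into the concrete weighted-Hardy model, which is where I expect the real work to lie. Imitating the passage from the minimal unitary dilation to the Sz.-Nagy--Foias model, I would isolate the purely analytic part of $N$ generated by $\H$ and identify it, through a spectral-multiplicity analysis of $N_1$ and $N_2$, with the minimal normal extension $M_z$ on $L^2(\prt\A,\E,\w)$ of the shift $M_z$ on $H^2(\A,\E,\w)$; the reducing summands of $N_1$ and $N_2$ that do not couple analytically to $\H$ then supply the residual unitaries $U_1$ and $rU_2$. The coefficient space $\E$ and the operator weight $\w$ are to be read off from this spectral data, and a Sarason-type argument places $\H$ as a coinvariant subspace of the resulting direct sum $A$.

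The main obstacle is precisely this identification: separating the analytic shift from the pure unitary residuals and manufacturing the weight $\w$ out of the two boundary spectral measures. In contrast to the disc, the two boundary circles of $\A$ are genuinely coupled by analytic continuation across the annulus, so $\w$ cannot be normalized away; verifying that the constructed $H^2(\A,\E,\w)$-shift is unitarily equivalent to the analytic part of $N$ is the technical crux, and this is the substance of Ball's theorem \cite{Ball79lifting}.
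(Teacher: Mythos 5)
The paper itself does not prove Theorem~A; it imports it from \cite{Ball79lifting}, so your attempt must be judged on its own merits, and it has a genuine gap in each direction. In the ``if'' direction, the step $f(T)=P_\CH f(A)|_\CH$ for \emph{rational} $f$ is asserted without justification, and under your reading of ``coinvariant'' it is false. Coinvariance makes $\CH$ semi-invariant, so Sarason's lemma gives this identity for \emph{polynomials} in $A$; but on the annulus, rational functions with poles off $\cl\A$ are not uniform limits of polynomials (by Runge's theorem one needs Laurent polynomials, i.e.\ $z^{-1}$ as well), and an $A$-invariant subspace need not be $A^{-1}$-invariant. Concretely, take $\E=\C$, $\w=1$, $A=M_z$ on $H^2(\A)$, and let $\CH^\perp$ be the closed span of $\{z^n\}_{n\ge 0}$, which is $M_z$-invariant. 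Then $\CH$ is the closed span of $\{z^n\}_{n\le -1}$, and the compression $T=P_\CH M_z|_\CH$ satisfies $Tz^{-1}=P_\CH 1=0$, so $0\in\si(T)$: this $T$ is not invertible, $1/z$ cannot even be applied to it, and $\cl\A$ is not a spectral set for it. So the forward implication only holds if ``coinvariant'' is taken in the stronger sense customary for multiply connected domains, namely $\CH^\perp$ invariant under the rational functional calculus of $A$ (equivalently, under both $A$ and $A^{-1}$); with that reading your argument can be repaired, but it must be rerun with Laurent polynomials plus Runge approximation, together with a verification that $\si(T)\ss\cl\A$ (e.g.\ from $\norm{T}\le 1$, $\norm{T^{-1}}\le 1/r$) and of the fact that $f(A)=f(\wt{N})|$ for rational $f$, which uses $\si(A)\ss\cl\A$.

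In the ``only if'' direction you do not actually give a proof. The reductions you perform are correct (Agler's theorem that $1$-spectral and completely $1$-spectral coincide on $\A$, Arveson--Stinespring to obtain a minimal normal $\prt\A$-dilation $N$, and the spectral splitting $N=N_1\oplus rV$ over the two boundary circles), but the remaining identification---manufacturing $\E$ and the operator weight $\w$, and proving that the part of $N$ analytically coupled to $\H$ is the minimal normal extension of a bundle shift $M_z$ on $H^2(\A,\E,\w)$, with the uncoupled reducing parts supplying $U_1$ and $rU_2$---is presented only as a plan, and you yourself defer it to ``the substance of Ball's theorem''. That identification (carried out by Ball, resting on Abrahamse--Douglas bundle-shift theory and Sarason's analysis of doubly invariant subspaces of $L^2(\prt\A)$) is precisely the mathematical content of the statement; invoking it as the final step makes the argument circular, so what you have is an outline of where the proof lives rather than a proof.
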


Recall that, given a Hilbert space $\E$, the vector valued Hardy space $H^2(\A,\E)$ consists of 
all analytic functions $f\colon\A\to\E$ such that 
\[
\sup_{r<\rho<1}\int_\T\norm{f(\rho z)}^2\,\abs{dz}<\infty. 
\]
There is no unique canonic way of choosing a Hilbert space norm on this space.  
For any bounded, positive and invertible operator $\om$ on $\E$,   
$H^2(\A,\E,\om)$ is defined as the vector space $H^2(\A,\E)$ 
equipped with the Hilbert space norm
\begin{equation}
\label{eq:norm-omega}
\norm{f}_{H^2(\A,\E,\om)}^2:=\int_\T\norm{f(z)}_E^2\,\abs{dz}+
\int_{r\T}\norm{\om f(z)}_E^2\,\abs{dz}
\end{equation}
All these norms are equivalent. 
The operator $M_z$ acts on 
$H^2(\A,\E,\om)$ by $M_zf(z):=zf(z)$. These operators 
are purely subnormal and mutually similar, 
but, in general, not unitarily equivalent one to another.  

For a general finitely connected domain $\Om$, Abrahamse and Douglas 
\cite{AbrDouglas1976} showed the importance of what they called bundle shifts; 
they form a subclass of pure subnormal operators with spectrum in $\cl\Om$. 
In the case of an annulus, the set of bundle shifts coincides with 
the family of operators $M_z$ on spaces $H^2(\A,\E,\om)$. 

Consider the polynomial $\al(t):=(1-t)(t-r^2)$. 
For any Hilbert space operator $T$, we define 
\begin{equation}\label{eq:alphaTT}
\al(T^*,T):=-T^{*2}T^2 + (1+r^2) T^* T - r^2 I, 
\end{equation}
where $I$ stands for the identity operator on the space where $T$ acts. 
Here we put all expressions $T^*$ on the left, 
because we are applying the so-called Agler's hereditary functional calculus, 
whose relevance in constructing operator models is well-known.  
We denote by $\BC_\al$  the class of all invertible bounded linear operators $T$ 
such that the operator $\al(T^*,T)$ is non-negative. 
Let $\BC_{1,r}$ stand for the family of all bounded invertible operators $T$ 
such that $\norm{T}\le1$ and $\norm{T^{-1}}\le1/r$. 
We have the following strict inclusions of classes of operators.

\begin{theorem}\label{thm:strictInclusions}
$\Sp\A\subsetneq\BC_\al\subsetneq\BC_{1,r}$.
\end{theorem}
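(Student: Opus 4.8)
The plan is to reduce the defining inequality of $\BC_\al$ to a more flexible form and then treat the two inclusions and the two strictness statements separately. Since $T$ is invertible, conjugating $\al(T^*,T)\ge0$, i.e.\ $T^{*2}T^2 + r^2 I \le (1+r^2)T^*T$, by $T^{-1}$ gives the equivalent condition
\[
T^*T + r^2 (T^{-1})^*T^{-1} \le (1+r^2) I,
\]
which I will call $(\dagger)$. Two further reformulations are useful: writing $H:=T^*T-r^2 I$ one checks directly that $\al(T^*,T)=H-T^*HT$; and setting $S:=rT^{-1}$ a short computation yields $\al(S^*,S)=r^2\,(T^{-2})^*\,\al(T^*,T)\,T^{-2}$, so that $\BC_\al$ is \emph{invariant under the involution} $T\mapsto rT^{-1}$ coming from the symmetry $z\mapsto r/z$ of the annulus. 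These three observations are the backbone of the argument.

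For $\Sp\A\subseteq\BC_\al$ I would use a single test function, in the spirit of von Neumann's inequality for the disc. Consider the rational $2\times2$ matrix function $G(z)=\begin{pmatrix} z & 0\\ rz^{-1} & 0\end{pmatrix}$, whose only pole is at $0\notin\cl\A$. Then $\norm{G(z)}_{M_2}^2=\abs{z}^2+r^2\abs{z}^{-2}$, and since $u\mapsto u+r^2/u$ is convex with equal endpoint values $1+r^2$ on $[r^2,1]$, one gets $\max_{z\in\cl\A}\norm{G(z)}_{M_2}^2=1+r^2$. As $T\in\Sp\A$ means $\cl\A$ is $1$-spectral, Agler's theorem (quoted in the Introduction) makes it completely $1$-spectral, so the matrix-valued von Neumann inequality applies to $G$ and gives $\norm{G(T)}^2\le 1+r^2$. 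Since $G(T)^*G(T)=\big(T^*T+r^2(T^{-1})^*T^{-1}\big)\oplus 0$, this is exactly $(\dagger)$, hence $\al(T^*,T)\ge0$.

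For $\BC_\al\subseteq\BC_{1,r}$ I first prove the lower bound $T^*T\ge r^2 I$ (equivalently $\norm{T^{-1}}\le1/r$). From $H\ge T^*HT$, conjugating repeatedly by $T$ yields $T^{*n}HT^n\ge T^{*(n+1)}HT^{n+1}$, so the scalar sequence $a_n:=\ip{H T^n x}{T^n x}=\norm{T^{n+1}x}^2-r^2\norm{T^n x}^2$ is non-increasing for each $x$. If $\ip{Hx_0}{x_0}=a_0<0$ for some $x_0$, then $\norm{T^{n+1}x_0}^2\le r^2\norm{T^n x_0}^2+a_0$, and iterating forces $\norm{T^n x_0}^2\to a_0/(1-r^2)<0$, contradicting positivity; hence $H\ge0$. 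Applying this established fact to $S=rT^{-1}\in\BC_\al$ gives $S^*S=r^2(TT^*)^{-1}\ge r^2 I$, i.e.\ $TT^*\le I$ and $\norm{T}\le1$. Thus $T\in\BC_{1,r}$.

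For the strict inclusions I would exhibit explicit examples. For $\BC_\al\subsetneq\BC_{1,r}$ take $T=R_\phi\,\operatorname{diag}(r,1)$ with $R_\phi$ a rotation, $0<\phi<\pi/2$; then $T^*T=\operatorname{diag}(r^2,1)$, so $r^2 I\le T^*T\le I$ and $T\in\BC_{1,r}$, whereas a direct computation gives $\ip{\al(T^*,T)e_1}{e_1}=-(1-r^2)r^2\sin^2\phi<0$, so $T\notin\BC_\al$. For $\Sp\A\subsetneq\BC_\al$ I would use a Jordan block $T=\begin{pmatrix} a & b\\ 0 & a\end{pmatrix}$ with $r<a<1$: by continuity and the computation at $b=0$, condition $(\dagger)$ persists for all $b$ up to an explicit threshold $b_\al(a)$, so such $T$ lie in $\BC_\al$, while $\cl\A$ being a genuine ($1$-)spectral set for $T$ forces $\abs{b}$ below the reciprocal of the annulus Schwarz--Pick constant for functions vanishing at $a$. \textbf{The main obstacle is precisely this last comparison:} certifying that the annulus extremal constant at $a$, which is governed by the reproducing kernel (theta functions) of the doubly connected domain, is strictly larger than $1/b_\al(a)$. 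Conceptually this is exactly the expected gap --- $\BC_\al$ is cut out by the \emph{single} Agler test function $\al$, whereas membership in $\Sp\A$ requires the entire one-parameter Agler family forced by the double connectivity of $\A$ --- so any $b$ strictly between the two thresholds produces the desired $T\in\BC_\al\setminus\Sp\A$; alternatively one may invoke the functional model developed later to show such $T$ is $K$-spectral only with $K>1$.
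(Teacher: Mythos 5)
Three of your four assertions are proved correctly, and partly by routes genuinely different from the paper's. Your reduction of $\al(T^*,T)\ge 0$ to $(\dagger)$ by conjugation with $T^{-1}$, and the invariance of $\BC_\al$ under $T\mapsto rT^{-1}$, reproduce Proposition~\ref{prop:bothCalpha}. Your proof of $\Sp\A\ss\BC_\al$ via the single matrix test function $G$ is correct and considerably shorter than the paper's, which instead applies Ball's model (Theorem~A) to $T^*$, reduces by a direct-integral argument to the scalar spaces $H^2(\A,\C,a)$, and then uses the duality with $M_z^t$ on $H^2(\B,\C,a)$ (Proposition~\ref{prop:duality-a}, Lemma~\ref{lem:MztinCa}); both arguments rest on one deep input (yours on Agler's theorem that $1$-spectral implies completely $1$-spectral, the paper's on Ball's lifting model). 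Your iterative proof that $T^*T-r^2I\ge 0$, combined with the involution, is a correct variant of the concavity argument in Proposition~\ref{prop:Tcontraction}. Finally, the family $T=R_\phi\operatorname{diag}(r,1)$ correctly witnesses $\BC_\al\subsetneq\BC_{1,r}$, and does so uniformly in $r$ (the computation $\ip{\al(T^*,T)e_1}{e_1}=-(1-r^2)r^2\sin^2\phi$ checks out), whereas the paper needs two separate matrices, $T_1$ for $0<r\le 1/2$ and $T_2$ for $1/2<r<1$.

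The strictness $\Sp\A\subsetneq\BC_\al$, however, is not proved, and the gap is exactly where you flag it. You produce a threshold $b_\al(a)$ below which the Jordan block lies in $\BC_\al$ (fine, by continuity from $b=0$), and a second threshold, governed by the annulus extremal problem, below which it lies in $\Sp\A$; but you never show these thresholds differ, and ``conceptually this is exactly the expected gap'' is not an argument. The fallback of invoking the functional model also fails: the model yields only upper bounds ($K$-spectrality for some $K$), never the needed negative statement that $\cl\A$ is \emph{not} $1$-spectral for a given $T$. Moreover, there is structural evidence that your comparison is genuinely hard: for a Jordan block, $T^*$ is unitarily equivalent (via the flip $\left(\begin{smallmatrix}0&1\\1&0\end{smallmatrix}\right)$ and entrywise conjugation) to a Jordan block with the same parameter moduli, so $T\in\BC_\al$ if and only if $T^*\in\BC_\al$; hence, if your route worked, it would exhibit an operator in $\{T\in\BC_\al: T^*\in\BC_\al\}\sm\Sp\A$, which is precisely the question the authors state (Remark at the end of Section~\ref{sec:duals}) was open in an earlier version of the paper and was settled only recently by Tsikalas. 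The paper avoids this entirely by exploiting the failure of $\BC_\al$ to be adjoint-invariant: the model operator $B\cong M_z^t$ on $H^2_r(\B,\C)$ lies in $\BC_\al$ by Proposition~\ref{prop-C-alpha}, while its dual, the forward shift $F$ of Proposition~\ref{prop:duality-r}, violates \eqref{eq:normDx} at the single test vector $e_0$ (the left-hand side equals $(1-r^2)^2(r^4-1)/r^4<0$); since $\Sp\A$ is invariant under passing to adjoints, $B\in\Sp\A$ would force $F\in\BC_\al$, a contradiction, so $B\in\BC_\al\sm\Sp\A$. Replacing your Jordan-block sketch by an adjoint-asymmetric example of this kind is the missing idea.
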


In this paper we focus on the study  of the class $\BC_\al$. 
It is natural to try an approach in the spirit of Sz.-Nagy--Foias theory. 
Suppose we are given an operator $T$ in $\BC_\al$. 
Denote by $D$ the non-negative square root of $\al(T^*,T)$ 
and by $\FD$ the closure of the range of $D$; 
these will be called the defect operator and the defect space of $T$, respectively. 
We will construct an explicit model 
where $D$ plays the role of the abstract defect operator. More precisely,  
the lifting of the model will involve the output transform 
\[
\CO_{T,D}\colon H\to H^2_r(\B,\FD), \quad 
\CO_{T,D}x(z)=D(z-T)^{-1}x, \quad x\in H, z\in\B. 
\]
Here $\B$ denotes the complementary of $\cl\A$. 
For any Hilbert space $\E$, 
the space $H^2_r(\B,\E)$ consists of all functions $f\colon\B\to\E$ representable as 
\[
\sum_{n=0}^{\infty} f_nz^n \ \text{ for } \lvert z\rvert<r, 
\qquad
\sum_{n=-\infty}^{-1} f_nz^n \ \text{ for } \lvert z\rvert>1,
\]
where $\{f_n\}_{n\in\Z}$ is a sequence in $\E$, with finite norm
\[
\norm{f}_{H^2_r(\B,\E)}^2 := \frac{1}{1-r^2} \sum_{n=0}^{\infty} r^{2n}\norm{f_n}^2 
+ \frac{1}{1-r^2}\sum_{n=-\infty}^{-1} \norm{f_n}^2.  
\]
On these spaces the operator $M_z^t$ acting by 
\begin{equation}\label{eq:Mzt}
M_z^tf(z)=z f(z)-\big(z f(z)\big)\vert_{z=\infty}
\end{equation}
is well-defined. 
If we identify a function $f$ in $H^2_r(\B,\E)$ with the two-sided vector sequence 
$(\ldots f_{-3},f_{-2},f_{-1}, f_0, f_1, f_2, \ldots)$,
then $M_z^t$ takes the form of the bilateral shift. 
Our model theorem for operators in $\BC_\al$ is the following. 

\begin{theorem}[Model theorem]\label{thm:main-b}
Suppose that $T$ is invertible.  
The following statements are equivalent. 
\begin{enumerate}[\rm(i)]
\item 
$\al(T^*,T) \ge 0$.
\item 
$T$ is unitarily equivalent to a part of an operator of the form 
\[
(M_z^t \text{ on } H^2_{r}(\B,\E))\oplus S,
\] 
where $E$ is a Hilbert space 
and $S$ is a subnormal operator whose minimal unitary extension
has spectrum contained in the union of the circles 
$\{\abs{z}=r\}$ and $\{\abs{z}=1\}$. 
\end{enumerate}
If {\rm(i)} holds, one can take $\E=\FD$ in {\rm(ii)}. 
\end{theorem}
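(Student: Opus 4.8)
The plan is to prove both implications by exhibiting the isometry built from the output transform $\CO_{T,D}$.

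\textbf{The direction (ii)$\Rightarrow$(i).} First I would record that the hereditary form is inherited by restrictions: if $M$ is invariant for an operator $A$ and $T=A|_M$, then $Tx=Ax$ and $T^2x=A^2x$ for $x\in M$, so
\[
\ip{\al(T^*,T)x}{x}=-\norm{T^2x}^2+(1+r^2)\norm{Tx}^2-r^2\norm x^2=\ip{\al(A^*,A)x}{x}.
\]
Thus $\al(A^*,A)\ge0$ forces $\al(T^*,T)\ge0$, and it remains to check $\al\ge0$ on each summand of $A=M_z^t\oplus S$. For $S$, its minimal normal extension $N$ has spectrum in $\{\abs z=r\}\cup\{\abs z=1\}$, where $\al(\bar z,z)=(1-\abs z^2)(\abs z^2-r^2)$ vanishes; hence $\al(N^*,N)=0$ and, by the inheritance above, $\al(S^*,S)=0$. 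For $M_z^t$ I would compute directly: after normalizing the basis $\{z^ne\}$, the operator $M_z^t$ is a bilateral weighted shift with $\abs{w_n}=1$ for $n\le-1$ and $\abs{w_n}=r$ for $n\ge0$, so $\al((M_z^t)^*,M_z^t)$ is diagonal with entries $-\abs{w_n}^2\abs{w_{n+1}}^2+(1+r^2)\abs{w_n}^2-r^2$; these vanish except on the junction block $n=-1$, where the entry equals $1-r^2>0$. Hence $\al((M_z^t)^*,M_z^t)=(1-r^2)P\ge0$ with $P$ the projection onto $\{z^{-1}e:e\in\E\}$, and (ii)$\Rightarrow$(i) follows.

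\textbf{The direction (i)$\Rightarrow$(ii).} Here I would build an isometry $\Phi=\CO_{T,D}\oplus W$ of $\H$ into $H^2_r(\B,\FD)\oplus\H_S$ (with $\H_S$ a residual space constructed below) intertwining $T$ with $M_z^t\oplus S$. The analytic half is immediate: using $z(z-T)^{-1}=I+T(z-T)^{-1}$ and $(z-T)^{-1}\to0$ at $z=\infty$ one gets $M_z^t\,\CO_{T,D}x=\CO_{T,D}(Tx)$, the subtracted constant being exactly $Dx$. The crux is the \emph{fundamental identity}
\[
\norm{\CO_{T,D}x}^2=\norm x^2-\lim_{n\to\infty}\norm{T^nx}^2-\lim_{n\to\infty}r^{2n}\norm{T^{-n}x}^2.
\]
To prove it I would expand the two one-sided Laurent series of $(z-T)^{-1}x$ on $\{\abs z<r\}$ and $\{\abs z>1\}$, read off the coefficients $DT^{\pm}x$, and evaluate the weighted norm of $H^2_r(\B,\FD)$. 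Writing $c_j=\norm{T^jx}^2$ and using $\norm{DT^jx}^2=-c_{j+2}+(1+r^2)c_{j+1}-r^2c_j$, both sums telescope along the recurrence with characteristic roots $1$ and $r^2$; the boundary terms at $\pm\infty$ converge because $\BC_\al\subset\BC_{1,r}$ (Theorem~\ref{thm:strictInclusions}) makes $c_n$ decreasing and $r^{2n}c_{-n}$ decreasing, producing the two limit terms.

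\textbf{The residual part and conclusion.} The two limits are governed by the strong limits $Q_+=\lim(T^*)^nT^n$ and $Q_-=\lim r^{2n}(T^*)^{-n}T^{-n}$, which exist as decreasing limits of positive operators. Passing to the completions $\H_+,\H_-$ of $\H$ in the seminorms $\ip{Q_\pm\cdot}{\cdot}^{1/2}$, the relations $T^*Q_+T=Q_+$ and $T^*Q_-T=r^2Q_-$ (together with their inverses, valid since $T$ is invertible) show that $T$ induces a unitary $U_+$ on $\H_+$ and an operator $rU_-$ with $U_-$ unitary on $\H_-$. Setting $\H_S=\H_+\oplus\H_-$, $S=U_+\oplus rU_-$, and $Wx$ the pair of images of $x$ in $\H_+\oplus\H_-$, we get $WT=SW$ and $\norm{Wx}^2=\lim\norm{T^nx}^2+\lim r^{2n}\norm{T^{-n}x}^2$, so $\Phi=\CO_{T,D}\oplus W$ is isometric by the fundamental identity and intertwines $T$ with $M_z^t\oplus S$. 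Being isometric and intertwining, $\Phi$ has closed range invariant under $M_z^t\oplus S$, so $T$ is unitarily equivalent to its restriction there, i.e.\ to a part of $M_z^t\oplus S$ with $\E=\FD$; the operator $S$ is normal with spectrum in $\{\abs z=r\}\cup\{\abs z=1\}$, hence subnormal of the required type. The step I expect to be the main obstacle is the fundamental identity: organizing the telescoping of the two weighted Laurent sums along the recurrence and justifying the convergence of the boundary terms, together with their clean identification with $Q_\pm$ and the resulting normal operator carried by the two circles.
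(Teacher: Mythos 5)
Your proposal is correct and follows essentially the same route as the paper's proof: for (i)$\Rightarrow$(ii) you build the isometry $\CO_{T,D}\oplus W$ (the paper's Theorem~\ref{thm:lifting}, resting on the telescoping identities of Lemmas~\ref{lem:normPos}--\ref{lem:normNeg} and Theorem~\ref{thm:norm}, which is exactly your ``fundamental identity''), and for (ii)$\Rightarrow$(i) you use inheritance of the hereditary form under restriction together with $\al=0$ on the normal summand and a direct check for $M_z^t$. The only cosmetic differences are that you telescope the negative-index sum directly instead of reducing it to $rT^{-1}$ via Proposition~\ref{prop:bothCalpha}, encode the residual parts by the gramians $Q_\pm$ rather than the sesquilinear forms $\langle\cdot,\cdot\rangle_\pm$, and verify $\al(N^*,N)=0$ by the spectral theorem rather than the paper's computation with $U_+\oplus U_-$.
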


Using a certain duality between models, in Theorem~\ref{thm-1-2} 
we obtain a model for $T^*$ with the structure of Theorem~A. 
The explicit model permits to obtain a concrete value of $K$ such that 
$\cl\A$ is completely $K$-spectral for operators in $\BC_\al$ 
(see Theorem~\ref{thm:sqrt2}). 

Let us give a few comments on 
the literature concerning the annulus. 
Apart from Agler's paper cited above, another 
classic work is that by Sarason \cite{Sar65}.  
He obtained for $H^p(\A)$ analogous results to the 
well-established theory for $H^p(\D).$ 
The transition is not always smooth; for example, 
Blaschke products cannot be implemented in the annulus. 
Sarason overcame this obstacle by introducing what he called 
\emph{modulus automorphic functions}. 
In the second part of the paper, he studied invariant and doubly invariant subspaces 
for the multiplication operator on $L^2(\partial\A).$ 
Recent papers dealing with the annulus are for example \cite{McCS12,Pie19,Pie20} 
(see also the references therein). 
In \cite{McCS12}, McCullough and Sultanic proved a kind of 
commutant lifting theorem for the annulus. 
Earlier, in a different context, a result on commutant lifting for finitely connected domains 
has been obtained by Ball in \cite{Ball79lifting}. 
In \cite{Pie19}, Pietrzycki obtained an analytic model on an annulus for left-invertible operators. 
This model allowed him to extend in \cite{Pie20} the notion of 
generalized multipliers for left-invertible analytic operators, 
introduced in \cite{DPP19}, to left-invertible operators. 
These works were motivated by weighted shifts on directed trees.  

Our forthcoming work \cite{BYdoms} is devoted to operator theory corresponding to 
general multiply connected domains, in a somewhat different context. 
Some consequences of the results of the present paper will be derived there. 

The paper is organized as follows. 
In Section~\ref{sec:model} we obtain basic properties  of operators in $\BC_\al$ 
and prove our model theorem. 
In Section~\ref{sec:duals} we study the duality inherent to this work and prove 
Theorem~\ref{thm:strictInclusions}. 
Finally, in Section~\ref{sec:consequences} we derive consequences from 
the explicit model obtained.

%-----------------------
\section{An explicit functional model for operators in $\BC_\al$}
\label{sec:model}
%-----------------------

This section is devoted to proving Theorem~\ref{thm:main-b}. 

Fix a bounded linear operator $T$ acting on a Hilbert space $H.$ 
Notice that $\al(T^*,T)\ge0$ if and only if 
\begin{equation}
\label{eq:normDx}
(1+r^2)\norm{Tx}^2-\norm{T^2x}^2-r^2\norm{x}^2\ge0
\end{equation}
for all $x$ in $\H$. 
Observe that the left hand side of \eqref{eq:normDx} is precisely $\norm{Dx}^2$. 
In fact, for all $x\in\H$ and all $n\in\Z$ we have
\begin{equation}\label{eq:iteration}
\norm{DT^nx}^2=(1+r^2)\norm{T^{n+1}x}^2-\norm{T^{n+2}x}^2-r^2\norm{T^{n}x}^2\ge0 . 
\end{equation}

Simple computations using \eqref{eq:normDx} reveal that the membership of 
operators $T$ and $rT^{-1}$ in the class $\BC_\al$ are closely related. 
More precisely: 

\begin{proposition}
\label{prop:bothCalpha}
Suppose that $T$ is invertible. Then $T$ is in $\BC_\al$ if and only if $rT^{-1}$ is in $\BC_\al$. 
\end{proposition}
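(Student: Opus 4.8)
The plan is to reduce the whole statement to the scalar inequality characterization already recorded in \eqref{eq:normDx}, since that turns an operator identity into an elementary change of variable. First I would recall that, by \eqref{eq:normDx}, membership $T\in\BC_\al$ is equivalent to
\[
(1+r^2)\norm{Tx}^2-\norm{T^2x}^2-r^2\norm{x}^2\ge0\quad\text{for all }x\in\H.
\]
The whole argument will consist in writing down the corresponding inequality for $S:=rT^{-1}$ and massaging it back into this same form.

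Next I would expand the defining inequality for $S$. Using $\norm{Sx}^2=r^2\norm{T^{-1}x}^2$ and $\norm{S^2x}^2=r^4\norm{T^{-2}x}^2$, the condition $S\in\BC_\al$ reads
\[
(1+r^2)r^2\norm{T^{-1}x}^2-r^4\norm{T^{-2}x}^2-r^2\norm{x}^2\ge0,
\]
and after dividing by the positive constant $r^2$ this becomes
\[
(1+r^2)\norm{T^{-1}x}^2-r^2\norm{T^{-2}x}^2-\norm{x}^2\ge0\quad\text{for all }x\in\H.
\]

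The crucial step is the change of variable. Since $T$ is assumed invertible, the map $x\mapsto T^{-2}x$ is a bijection of $\H$, so I may substitute $x=T^2y$ with $y$ ranging freely over $\H$. Under this substitution the three terms become $\norm{Ty}^2$, $\norm{y}^2$, and $\norm{T^2y}^2$ respectively, and the inequality turns into
\[
(1+r^2)\norm{Ty}^2-\norm{T^2y}^2-r^2\norm{y}^2\ge0\quad\text{for all }y\in\H,
\]
which is exactly the condition $T\in\BC_\al$. This proves both implications simultaneously. I expect no genuine obstacle here; the only care needed is the bijectivity of the substitution (which uses invertibility) and tracking the powers of $r$. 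Conceptually, the computation is forced by the self-reciprocal symmetry of the polynomial: under $t\mapsto r^2/t$ one has $\al(r^2/t)=(r^2/t^2)\,\al(t)$, and the operator substitution $T\mapsto rT^{-1}$ is precisely the realization of this symmetry at the level of $\BC_\al$.
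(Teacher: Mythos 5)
Your proof is correct and takes essentially the same route as the paper, which leaves this result as a ``simple computation using \eqref{eq:normDx}'': dividing the inequality for $rT^{-1}$ by $r^2$ and substituting $x=T^2y$ is exactly the intended argument. In fact, the same change of variable appears later in the paper in operator form as the identity \eqref{eq:DtildeD}, $r^{-2}\norm{\wt{D}T^2x}^2=\norm{Dx}^2$, so no gap here.
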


We next obtain the inclusion $\BC_\al\ss\BC_{1,r},$ 
stated in Theorem~\ref{thm:strictInclusions}. 
The complete proof of that theorem will be given at the end of Section~\ref{sec:duals}. 

\begin{proposition}\label{prop:Tcontraction}
If $T$ is in $\BC_\al$, then $\norm{T}\le 1$ and $\norm{T^{-1}}\le 1/r$.  
\end{proposition}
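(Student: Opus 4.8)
The plan is to prove the two estimates separately, reducing the first to the second by the duality already available in Proposition~\ref{prop:bothCalpha}. First I would observe that, since $T\in\BC_\al$, the operator $S:=rT^{-1}$ also lies in $\BC_\al$; hence once the bound $\norm{T^{-1}}\le 1/r$ has been established for \emph{every} operator in the class, applying it to $S$ yields $\norm{S^{-1}}=\norm{T}/r\le 1/r$, that is $\norm{T}\le1$. So it suffices to prove that $T$ is bounded below by $r$, equivalently $\norm{Tx}\ge r\norm{x}$ for all $x\in\H$.

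To this end, fix $x\in\H$ and set $a_n:=\norm{T^nx}^2$ for $n\ge0$. With the left-hand side of \eqref{eq:iteration} non-negative, that identity reads $a_{n+2}\le(1+r^2)a_{n+1}-r^2a_n$, a discrete inequality governed by the characteristic polynomial $\la^2-(1+r^2)\la+r^2=(\la-1)(\la-r^2)$. The two factorizations of this polynomial suggest introducing the auxiliary sequences $c_n:=a_{n+1}-a_n$ and $b_n:=a_{n+1}-r^2a_n$. A one-line computation from the recurrence then gives $c_{n+1}\le r^2c_n$ and $b_{n+1}\le b_n$; in particular $(b_n)$ is non-increasing.

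Next I would show that $a_n$ converges to some $L\ge0$ as $n\to\infty$. If $c_N\le0$ for some $N$, then $c_n\le0$ for all $n\ge N$ by the relation $c_{n+1}\le r^2c_n$, so $(a_n)$ is eventually non-increasing and bounded below, hence convergent; otherwise $c_n>0$ for all $n$, and then $c_n\le r^{2n}c_0$ is summable, so again $a_n$ converges. Passing to the limit in $b_n=a_{n+1}-r^2a_n$ gives $b_n\to(1-r^2)L\ge0$, and since $(b_n)$ is non-increasing this forces $b_0\ge(1-r^2)L\ge0$, i.e.\ $\norm{Tx}^2\ge r^2\norm{x}^2$. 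As $x$ was arbitrary, $T$ is bounded below by $r$, whence $\norm{T^{-1}}\le1/r$; combined with the duality step above this completes the proof.

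The routine parts are the two elementary manipulations yielding the recursions for $b_n$ and $c_n$. The one genuinely necessary input is the convergence of $a_n=\norm{T^nx}^2$ as $n\to\infty$, and I expect the sign analysis establishing this convergence — and with it the correct sign of the limit of $b_n$ — to be the main point of the argument.
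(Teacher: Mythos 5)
Your proof is correct, but it takes a genuinely different route from the paper's. The paper proves the \emph{contractivity} half directly: it rewrites \eqref{eq:iteration} as saying that the broken line with vertices $(r^{2n},\norm{T^nx}^2)_{n\in\Z}$ is concave, notes that the abscissas $r^{2n}$ tend to $\infty$ as $n\to-\infty$ while the broken line stays above the $x$-axis, concludes that every edge has non-negative slope, and reads off $\norm{Tx}\le\norm{x}$ from the edge joining $n=0$ and $n=1$; Proposition~\ref{prop:bothCalpha} then gives $\norm{T^{-1}}\le 1/r$. You run the duality in the opposite direction: you prove the \emph{lower bound} $\norm{Tx}\ge r\norm{x}$ directly, by splitting the second-order inequality $a_{n+2}\le(1+r^2)a_{n+1}-r^2a_n$ along the two roots of $(\la-1)(\la-r^2)$ into the first-order inequalities $c_{n+1}\le r^2c_n$ and $b_{n+1}\le b_n$, extracting convergence of $a_n$ from a sign analysis of $(c_n)$, and then letting $n\to\infty$ in the monotone sequence $(b_n)$; only afterwards does Proposition~\ref{prop:bothCalpha} deliver $\norm{T}\le1$. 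There is no circularity, since your lower-bound argument is proved for every operator in $\BC_\al$ before the duality is invoked. The trade-off is worth noting: the paper's direct half genuinely uses the two-sided sequence (vertices with $n\to-\infty$, hence the invertibility of $T$), whereas your direct half uses only the forward iterates $T^nx$, $n\ge0$, so it actually shows that \emph{any} bounded operator with $\al(T^*,T)\ge0$, invertible or not, is bounded below by $r$. Your case analysis also yields, as a by-product, the existence of $\lim_n\norm{T^nx}^2$, which the paper obtains only after the proposition (in the subsequent Corollary) as a consequence of contractivity. The paper's geometric argument is shorter and more visual; yours is more algebraic and slightly more general in the half it proves directly.
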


\begin{proof}
Note that the inequality in \eqref{eq:iteration} can be written as 
\[
\frac{\norm{T^{n+1}x}^2-\norm{T^nx}^2}{r^{2n+2}-r^{2n}}
\le 
\frac{\norm{T^{n+2}x}^2-\norm{T^{n+1}x}^2}{r^{2n+4}-r^{2n+2}}.  
\]
If we consider $x\in\H$ fixed, this means that 
the broken line with vertices in $(r^{2n},\norm{T^nx}^2)_{n\in\Z}$ is concave. 
Then, for any its edge, the straight line containing it is above the whole broken line.  
Since $x$-coordinates $r^{2n}$ of the vertices go to $\infty$ as $n\to-\infty$ and 
the broken line is above the $x$-axis, all edges have non-negative slope. 
In particular, $\norm{Tx}^2\le \norm{x}^2$, so $T$ is a contraction.
The second part of the statement follows using Proposition~\ref{prop:bothCalpha}. 
\end{proof}

As usual, let $\sigma(T)$ denote the spectrum of $T$. 
The following result is an immediate consequence of Proposition~\ref{prop:Tcontraction}. 

\begin{corollary}
If $T$ is in $\BC_\al$, then $\si(T)\ss \cl\A$, and the limits
\[
L^+(T,x) := \lim_{n \to \infty} \norm{T^n x}^2, \quad
L^-(T,x) := \lim_{n \to \infty} \norm{r^n T^{-n} x}^2 
\]
exist for all $x$ in $\H$.
\end{corollary}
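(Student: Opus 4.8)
The plan is to deduce both assertions directly from the two norm bounds $\norm{T}\le1$ and $\norm{T^{-1}}\le1/r$ supplied by Proposition~\ref{prop:Tcontraction}, together with standard spectral facts; since the corollary is labelled immediate, the whole argument is short.

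First I would establish the spectral inclusion. Since $\norm{T}\le1$, the spectral radius of $T$ is at most $1$, so $\si(T)\ss\{\abs{z}\le1\}$. For the inner bound I would use that $T$ is invertible and that $\si(T^{-1})=\{1/\la:\la\in\si(T)\}$ by the spectral mapping theorem for the inverse. From $\norm{T^{-1}}\le1/r$ the spectral radius of $T^{-1}$ is at most $1/r$, hence $\abs{1/\la}\le1/r$, i.e.\ $\abs{\la}\ge r$, for every $\la\in\si(T)$. Combining the two bounds yields $\si(T)\ss\{r\le\abs{z}\le1\}=\cl\A$.

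For the existence of $L^+(T,x)$ I would exploit that $T$ is a contraction. For each fixed $x$ the sequence $\norm{T^nx}^2$ satisfies $\norm{T^{n+1}x}^2=\norm{T(T^nx)}^2\le\norm{T}^2\norm{T^nx}^2\le\norm{T^nx}^2$, so it is non-increasing and bounded below by $0$; being monotone and bounded, it converges to a finite limit. For $L^-(T,x)$ I would apply the very same monotonicity argument to the operator $rT^{-1}$: by Proposition~\ref{prop:bothCalpha} (or directly, since $\norm{rT^{-1}}=r\norm{T^{-1}}\le1$) the operator $rT^{-1}$ is again a contraction, and $(rT^{-1})^n=r^nT^{-n}$. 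Hence $\norm{r^nT^{-n}x}^2=\norm{(rT^{-1})^nx}^2$ is non-increasing in $n$ and bounded below, so it too converges.

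I do not expect any genuine obstacle here: the corollary is a direct reading of the contractivity of $T$ and of $rT^{-1}$. The only mildly technical ingredient is the use of the spectral mapping theorem for the inverse in the spectral inclusion, and even that is entirely routine.
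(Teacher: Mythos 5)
Your proof is correct and follows exactly the route the paper intends: the corollary is stated there as an immediate consequence of Proposition~\ref{prop:Tcontraction}, and your argument---spectral radius bounds for $T$ and $T^{-1}$ giving $\si(T)\ss\cl\A$, plus monotone convergence of $\norm{T^nx}^2$ and $\norm{(rT^{-1})^nx}^2$ from contractivity of $T$ and $rT^{-1}$---is precisely the routine verification being left implicit.
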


Notice that $L^+(T,Tx) = L^+(T,x)$ and $L^-(T,Tx) = r^2 L^-(T,x)$ for all $x$ in $\H.$ 
This obvious fact will be implicitly used below.

\begin{lemma}\label{lem:normPos}
If $T$ is in $\BC_\al$, then
\begin{equation}\label{eq:seriesDefects}
\sum_{n=0}^{\infty} \norm{DT^n x}^2 = \norm{Tx}^2 - r^2 \norm{x}^2 + (r^2-1) L^+(T,x)
\end{equation}
for all $x$ in $\H$. 
In particular, the series on the left hand side converges. 
\end{lemma}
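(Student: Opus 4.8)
The plan is to compute the partial sums of the series directly from the identity \eqref{eq:iteration} by telescoping, and then pass to the limit. Write $a_n := \norm{T^n x}^2$ for $n \ge 0$, so that \eqref{eq:iteration} reads $\norm{DT^n x}^2 = (1+r^2) a_{n+1} - a_{n+2} - r^2 a_n$. The key algebraic observation is that this expression regroups as a sum of two first differences,
\[
\norm{DT^n x}^2 = (a_{n+1} - a_{n+2}) + r^2 (a_{n+1} - a_n),
\]
each of which telescopes when summed over $n$.

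Summing from $n=0$ to $N-1$ I would obtain
\[
\sum_{n=0}^{N-1} \norm{DT^n x}^2 = (a_1 - a_{N+1}) + r^2 (a_N - a_0),
\]
where the first bracket comes from the telescoping of the differences $a_{n+1} - a_{n+2}$ and the second from the differences $a_{n+1} - a_n$. Substituting $a_0 = \norm{x}^2$ and $a_1 = \norm{Tx}^2$ then gives a closed form for every partial sum.

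To finish, I would let $N \to \infty$. By the Corollary preceding the statement, the limit $L^+(T,x) = \lim_{n\to\infty} \norm{T^n x}^2$ exists, so both $a_{N+1}$ and $a_N$ converge to $L^+(T,x)$. Hence the partial sums converge to
\[
\norm{Tx}^2 - r^2 \norm{x}^2 + (r^2-1) L^+(T,x),
\]
which is exactly the claimed identity; in particular the series on the left converges, its terms being non-negative by \eqref{eq:iteration}. There is essentially no serious obstacle here: the entire content is the telescoping regrouping together with the already-established existence of $L^+(T,x)$. The only point requiring a word of care is that one should pass to the limit in the full partial sum, taking advantage of the fact that each telescoped piece converges on its own, so that splitting the limit causes no difficulty.
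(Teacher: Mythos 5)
Your proof is correct and is essentially the same as the paper's: both compute the partial sums in closed form by telescoping the identity \eqref{eq:iteration} and then let $N\to\infty$ using the existence of $L^+(T,x)$. The only difference is cosmetic — you regroup each term as two first differences $(a_{n+1}-a_{n+2}) + r^2(a_{n+1}-a_n)$, while the paper shifts indices in three separate sums — and both yield the identical partial-sum formula and limit.
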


\begin{proof}
Note that
\[
\norm{DT^n x}^2 = -\norm{T^{n+2}x}^2 + (1+r^2) \norm{T^{n+1}x}^2 -r^2\norm{T^n x}^2. 
\]
for all $x \in\H$ and all $n\in\Z$. Therefore
\[
\begin{split}
\sum_{n=0}^{N} \norm{DT^n x}^2 &= -\sum_{n=2}^{N+2} \norm{T^n x}^2 
+ (1+r^2) \sum_{n=1}^{N+1} \norm{T^n x}^2 - r^2\sum_{n=0}^{N} \norm{T^n x}^2 \\
&= -r^2 \norm{x}^2 + \norm{Tx}^2 + r^2 \norm{T^{N+1}x}^2 - \norm{T^{N+2}x}^2.
\end{split}
\]
for all $N\in\N$. 
The statement follows letting $N\to\infty$. 
\end{proof}

\begin{lemma}\label{lem:normNeg}
If $T$ is in $\BC_\al$, then
\begin{equation}\label{eq:seriesDefectsNeg}
\sum_{n=-\infty}^{-1} r^{-2n-2} \norm{DT^n x}^2
=
- \norm{Tx}^2 + \norm{x}^2 + (r^2-1) L^-(T,x)
\end{equation}
for all $x$ in $\H$. 
In particular, the series on the left hand side converges. 
\end{lemma}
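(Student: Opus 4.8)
The plan is to mirror the proof of Lemma~\ref{lem:normPos}, the only new ingredient being the weight $r^{-2n-2}$, which is exactly what is needed to tame the negatively indexed tail, where $\norm{T^n x}$ no longer decays. Starting from the same pointwise identity
\[
\norm{DT^n x}^2 = -\norm{T^{n+2}x}^2 + (1+r^2)\norm{T^{n+1}x}^2 - r^2\norm{T^n x}^2,
\]
valid for all $n\in\Z$ by \eqref{eq:iteration}, I would multiply through by $r^{-2n-2}$ and introduce the abbreviation $b_n := r^{-2n}\norm{T^n x}^2$. A direct check of the powers of $r$ then turns the weighted identity into the telescoping form
\[
r^{-2n-2}\norm{DT^n x}^2 = -r^2 b_{n+2} + (1+r^2) b_{n+1} - b_n.
\]

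First I would sum this from $n=-N$ to $n=-1$. All interior terms cancel, since there the coefficients combine as $-r^2+(1+r^2)-1=0$, and one is left with only the four boundary contributions
\[
\sum_{n=-N}^{-1} r^{-2n-2}\norm{DT^n x}^2 = -r^2 b_1 + b_0 + r^2 b_{-N+1} - b_{-N}.
\]
Translating back, $-r^2 b_1 = -\norm{Tx}^2$ and $b_0 = \norm{x}^2$ account for the first two terms on the right-hand side of \eqref{eq:seriesDefectsNeg}. For the remaining pair I would use that $b_{-N}=r^{2N}\norm{T^{-N}x}^2$ and $b_{-N+1}=r^{2(N-1)}\norm{T^{-(N-1)}x}^2$ both converge to $L^-(T,x)$ as $N\to\infty$, a limit whose existence is guaranteed by the corollary following Proposition~\ref{prop:Tcontraction}. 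Hence $r^2 b_{-N+1}-b_{-N}\to (r^2-1)L^-(T,x)$, which yields at once the convergence of the series and the claimed formula.

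An entirely equivalent route is to exploit the symmetry of the class. By Proposition~\ref{prop:bothCalpha} the operator $S:=rT^{-1}$ again lies in $\BC_\al$, with $S^n x = r^n T^{-n}x$ and $L^+(S,x)=L^-(T,x)$; a short computation using \eqref{eq:iteration} shows that $\norm{D_S S^n x}^2 = r^{2n+2}\norm{DT^{-n-2}x}^2$, where $D_S$ is the defect operator of $S$. Applying Lemma~\ref{lem:normPos} to $S$ and reindexing via $k=-n-2$ then reproduces \eqref{eq:seriesDefectsNeg}, once one accounts separately for the omitted term $\norm{DT^{-1}x}^2$ at $n=-1$ and simplifies using the identity \eqref{eq:iteration} for $n=-1$. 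The main obstacle in either approach is purely bookkeeping: one must track the shifted indices and the powers of $r$ in the boundary terms, and confirm that the weight is chosen precisely so that the otherwise growing quantities $r^{-2n}\norm{T^n x}^2$ converge at the bottom end. No genuine analytic difficulty arises beyond the existence of $L^-(T,x)$, which is already in hand.
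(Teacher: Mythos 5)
Your proposal is correct, and your primary argument takes a genuinely different route from the paper's. The paper deduces Lemma~\ref{lem:normNeg} from Lemma~\ref{lem:normPos} applied to $rT^{-1}$: by Proposition~\ref{prop:bothCalpha} this operator is again in $\BC_\al$, its defect $\wt{D}$ is tied to $D$ via \eqref{eq:DtildeD}, and the substitution $y=r^{-1}Tx$ converts \eqref{eq:seriesDefectsPos} directly into \eqref{eq:seriesDefectsNeg}---this is exactly your second, sketched route, except that the paper's choice of $y$ shifts the indices so that no separate $n=-1$ term needs to be added by hand. Your first route instead redoes the telescoping mechanism of Lemma~\ref{lem:normPos} directly on the weighted sum: with $b_n=r^{-2n}\norm{T^nx}^2$, the identity $r^{-2n-2}\norm{DT^nx}^2=-r^2b_{n+2}+(1+r^2)b_{n+1}-b_n$ is correct, the interior coefficients indeed cancel, the boundary terms $-r^2b_1+b_0+r^2b_{-N+1}-b_{-N}$ are as you state, and $b_{-N},\,b_{-N+1}\to L^-(T,x)$ by the Corollary, which yields \eqref{eq:seriesDefectsNeg} and, since the summands are non-negative by \eqref{eq:iteration}, the convergence claim. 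What each approach buys: your telescoping is self-contained---it needs neither Proposition~\ref{prop:bothCalpha} nor the defect identity \eqref{eq:DtildeD}, and it makes transparent that the weight $r^{-2n-2}$ is exactly what renormalizes the growing quantities $\norm{T^nx}^2$ at the negative end---while the paper's reduction is shorter, reuses Lemma~\ref{lem:normPos} instead of repeating its proof, and showcases the $T\leftrightarrow rT^{-1}$ symmetry that recurs throughout the paper (notably in Section~\ref{sec:duals}).
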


\begin{proof}
By Proposition~\ref{prop:bothCalpha}, 
$rT^{-1}\in\BC_\al$. 
Let $\wt{D}$ denote the defect operator of $rT^{-1}$. 
A straightforward computation gives that
\begin{equation}\label{eq:DtildeD}
r^{-2} \norm{\wt{D}T^2x}^2 = \norm{Dx}^2 
\end{equation} 
for all $x \in\H$. 
Using Lemma~\ref{lem:normPos} for $rT^{-1}$, we obtain
\begin{equation}\label{eq:seriesDefectsPos}
\sum_{n=0}^{\infty} \norm{\wt{D} r^n T^{-n} y}^2 
= \norm{rT^{-1}y}^2 - r^2 \norm{y} + (r^2-1) L^+(rT^{-1},y)
\end{equation}
for all $y \in\H$. Set $y = r^{-1}Tx$. Then
\[
L^+(rT^{-1},y) 
= \lim_{n \to \infty} \norm{r^n T^{-n} y}^2 
= \lim_{n \to \infty} \norm{r^{n-1} T^{-n+1} x}^2 
= L^-(T,x).
\]
Hence the right hand side of \eqref{eq:seriesDefectsPos} 
is equal to 
the right hand side of \eqref{eq:seriesDefectsNeg}. 
Using \eqref{eq:DtildeD} we 
get 
\[
\sum_{n=0}^{\infty} \norm{\wt{D} r^n T^{-n} y}^2 
= \sum_{m=-\infty}^{-1} r^{-2m-2} \norm{DT^m x}^2.
\]
Therefore \eqref{eq:seriesDefectsNeg} follows. 
\end{proof}

\begin{theorem}\label{thm:norm}
If $T$ is in $\BC_\al$, then 
\[
\begin{split}
\norm{x}^2 = \,
&\dfrac{1}{1-r^2} \sum_{n=0}^{\infty} \norm{DT^n x}^2 + 
\dfrac{1}{1-r^2} \sum_{n=-\infty}^{-1} r^{-2n-2} \norm{DT^n x}^2 \\
&+L^+(T,x) + L^-(T,x)
\end{split}
\]
for all $x$ in $\H$. 
In particular, both series on the right hand side converge. 
\end{theorem}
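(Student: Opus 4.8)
The plan is to derive the identity by simply superposing the two series formulas already established in Lemma~\ref{lem:normPos} and Lemma~\ref{lem:normNeg}; essentially no further analytic input is needed, since both lemmas already guarantee the convergence of their respective series. First I would fix $x\in\H$ and add the right-hand sides of \eqref{eq:seriesDefects} and \eqref{eq:seriesDefectsNeg} term by term.

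Adding the two identities, the terms $+\norm{Tx}^2$ and $-\norm{Tx}^2$ cancel, the terms $-r^2\norm{x}^2$ and $+\norm{x}^2$ combine to $(1-r^2)\norm{x}^2$, and the two limit contributions combine to $(r^2-1)\big(L^+(T,x)+L^-(T,x)\big)$. Thus
\[
\sum_{n=0}^{\infty} \norm{DT^n x}^2 + \sum_{n=-\infty}^{-1} r^{-2n-2} \norm{DT^n x}^2 = (1-r^2)\norm{x}^2 + (r^2-1)\big(L^+(T,x)+L^-(T,x)\big).
\]
Since $r^2-1=-(1-r^2)$, dividing through by $1-r^2>0$ isolates $\norm{x}^2$ and yields exactly the claimed formula, with each series now carrying the factor $\tfrac{1}{1-r^2}$ and the two limit terms transposed to the right-hand side with a plus sign. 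The convergence of both series on the right is inherited directly from the two lemmas.

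The only point deserving any care is the sign bookkeeping between the factor $(r^2-1)$ produced by the addition and the prefactor $\tfrac{1}{1-r^2}$ introduced upon division; there is no genuine obstacle here. The substance of the argument—in particular the treatment of the negatively indexed series by passing to the dual operator $rT^{-1}$ via Proposition~\ref{prop:bothCalpha}—has already been carried out in the preceding lemmas, so this theorem is a purely formal consequence.
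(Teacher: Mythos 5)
Your proposal is correct and is exactly the paper's own proof: the paper also obtains Theorem~\ref{thm:norm} by adding \eqref{eq:seriesDefects} and \eqref{eq:seriesDefectsNeg} and rearranging, with convergence of both series inherited from Lemmas~\ref{lem:normPos} and~\ref{lem:normNeg}. The sign bookkeeping in your computation checks out.
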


\begin{proof}
Add up equations \eqref{eq:seriesDefects} and \eqref{eq:seriesDefectsNeg}, 
and rearrange the terms. 
\end{proof}

Now we are headed to obtain the lifting of the model for operators in $\BC_\al$. 
Theorem~\ref{thm:norm} foreshadows its structure. 
Let $T\in\BC_\al$. Consider the operator 
\[
\CO_{T,D}\colon H\to H^2_r(\B,\FD), \quad 
\CO_{T,D}x(z)=D(z-T)^{-1}x, \quad x\in H, z\in\B. 
\]
The function $\CO_{T,D}x(z)$ can be represented as 
\[
-\sum_{n=0}^{\infty} (DT^{-n-1}x)z^n \ \text{ for } \lvert z\rvert<r, 
\qquad
\sum_{n=-\infty}^{-1} (DT^{-n-1}x)z^n \ \text{ for } \lvert z\rvert>1.   
\]
Hence 
\[
\begin{split}
\lVert \CO_{T,D}x\rVert_{H^2_r(\B,\FD)}^2 
&= \frac{1}{1-r^2}\sum_{n=0}^{\infty} r^{2n}\lVert DT^{-n-1}x\rVert^2
+\frac{1}{1-r^2}\sum_{n=-\infty}^{-1} \lVert DT^{-n-1}x\rVert^2\\
&= \frac{1}{1-r^2}\sum_{n=0}^{\infty} \lVert DT^{n}x\rVert^2
+\frac{1}{1-r^2}\sum_{n=-\infty}^{-1} r^{-2n-2}\lVert DT^{n}x\rVert^2,
\end{split}
\]
for all $x$ in $\H$. Therefore, by Theorem~\ref{thm:norm}, $\CO_{T,D}$ is a contraction.

We construct the second component of the lifting using a well-known 
argument (cf.\ \cite{Ker89,SzN47}), that we also employed in \cite[Section~4]{BY}. 
By Proposition~\ref{prop:Tcontraction} and the polarization identity,  
one can define on $\H$ the sesquilinear form 
\[
\left\langle x,y \right\rangle_+ 
:= \lim_{n \to \infty} \left\langle T^nx, T^ny \right\rangle. 
\]
Let $\H_0$ be the subspace of vectors $x$ in $\H$ such that 
$L^+(T,x) = \left\langle x,x \right\rangle_+ =0$. 
Denote by $\wt{H}_+$ the Hilbert space obtained as the completion of 
the quotient space $\H/\H_0$ in the 
pre-Hilbert norm $x\mapsto \langle x,x \rangle_+^{1/2}$.  
Let $W_+:H \to \wt{H}_+$ be the operator that maps each vector $x$ to its class $[x]_+$. 
In the same way, thanks to Proposition~\ref{prop:Tcontraction}, 
starting from the sesquilinear form 
\[
\left\langle x,y \right\rangle_- 
:= \lim_{n \to \infty} \left\langle r^nT^{-n}x, r^nT^{-n}y \right\rangle 
\]
on $\H$, we define the Hilbert space $\wt{H}_-$ and the operator $W_-:H \to \wt{H}_-$ 
mapping  each vector $x$ to its class $[x]_-$. 
Denote by $W$ the operator $(W_+,W_-)$, and let $\wt{H}$ be the closure of its range. 
That is, $\wt{H}$ is a closed subspace of $\wt{H}_+ \oplus \wt{H}_-$, and 
\[
W : H \to \wt{H}, \quad Wx = W_+x \oplus W_-x. 
\]
Notice that 
\[
\norm{Wx}^2=\norm{W_+x}^2+\norm{W_-x}^2=L^+(T,x)+L^-(T,x)
\]
for all $x\in\H$. Therefore, as a consequence of Theorem~\ref{thm:norm} we obtain 
our lifting theorem. 

\begin{theorem}[Lifting of the model]\label{thm:lifting}
If $T$ is in $\BC_\al$, the transformation 
\[
(\CO_{T,D},W)\colon\H\to H^2_{r}(\B,\FD) \oplus\wt{H}, \quad 
(\CO_{T,D},W)(x)=\CO_{T,D}x\oplus Wx
\]
is an isometry. 
\end{theorem}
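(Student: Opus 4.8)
The plan is to verify that the map $(\CO_{T,D},W)$ preserves norms, which is exactly the content of Theorem~\ref{thm:norm}. Since both target spaces are Hilbert spaces and the claim is that the map is an isometry, it suffices to check that $\norm{(\CO_{T,D},W)x}^2 = \norm{x}^2$ for all $x\in\H$; the map is clearly linear, so isometry in the sense of preserving the full inner product then follows by polarization.

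First I would write out the squared norm of the image explicitly as
\[
\norm{(\CO_{T,D},W)x}^2 = \norm{\CO_{T,D}x}_{H^2_r(\B,\FD)}^2 + \norm{Wx}^2.
\]
For the first summand I would invoke the computation already carried out just before the statement, which expresses $\norm{\CO_{T,D}x}_{H^2_r(\B,\FD)}^2$ in terms of the two defect series
\[
\frac{1}{1-r^2}\sum_{n=0}^{\infty} \norm{DT^{n}x}^2
+\frac{1}{1-r^2}\sum_{n=-\infty}^{-1} r^{-2n-2}\norm{DT^{n}x}^2.
\]
For the second summand I would use the identity $\norm{Wx}^2 = \norm{W_+x}^2+\norm{W_-x}^2 = L^+(T,x)+L^-(T,x)$, established just above the statement from the orthogonal decomposition $\wt H\ss\wt H_+\oplus\wt H_-$ and the definitions of $W_\pm$.

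Adding these two contributions gives precisely the right-hand side of the formula in Theorem~\ref{thm:norm}, which by that theorem equals $\norm{x}^2$. This completes the verification. In fact there is essentially no obstacle here: the theorem is a direct bookkeeping consequence of the norm identity, and the only thing to be careful about is confirming that the preparatory computation of $\norm{\CO_{T,D}x}^2$ has correctly reindexed the coefficients $DT^{-n-1}x$ of the Laurent expansion of $\CO_{T,D}x$ into the defect series $DT^{n}x$ with the matching weights, so that the two series line up exactly with those appearing in Theorem~\ref{thm:norm}. Since that reindexing and the weight bookkeeping are already displayed in the excerpt, the remaining step is simply to assemble the pieces and cite Theorem~\ref{thm:norm}.
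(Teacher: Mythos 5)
Your proof is correct and is essentially identical to the paper's own argument: the paper likewise combines the displayed computation of $\norm{\CO_{T,D}x}_{H^2_r(\B,\FD)}^2$ (with the same reindexing of the coefficients $DT^{-n-1}x$) with the identity $\norm{Wx}^2=L^+(T,x)+L^-(T,x)$ and concludes by citing Theorem~\ref{thm:norm}. There is nothing missing; the assembly you describe is exactly what the authors intend by ``as a consequence of Theorem~\ref{thm:norm} we obtain our lifting theorem.''
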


In order to obtain the model theorem for operators in $\BC_\al$, 
we first need to analyze the particular case of operators $T$ such that $\al(T^*,T) = 0$. 

\begin{theorem}\label{thm:main2}
Suppose that $T$ is invertible.  
The following statements are equivalent. 
\begin{enumerate}[\rm(i)]
\item 
$\al(T^*,T) = 0$.
\item 
$T$ is a subnormal operator whose minimal normal 
extension can be written as a sum $U_+ \oplus U_-$, 
where $U_+$ and $r^{-1}U_-$ are unitary operators. 
\end{enumerate}
In this case, either $T$ is normal or $\sigma(T)=\cl{\A}$.
\end{theorem}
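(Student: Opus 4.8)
The plan is to read (i)$\Rightarrow$(ii) off the rigidity that the equality $\al(T^*,T)=0$ forces on the numbers $\norm{T^nx}$, to verify (ii)$\Rightarrow$(i) by a direct computation, and to deduce the final dichotomy from the resulting model together with a Runge-type separation of the two boundary circles.

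For (i)$\Rightarrow$(ii) I would first note that $\al(T^*,T)=0$ says precisely $D=0$, so $\CO_{T,D}=0$ and the isometry of Theorem~\ref{thm:lifting} degenerates to the assertion that $W=(W_+,W_-)\colon\H\to\wt H$ is an isometry; being isometric with dense range it is unitary. With $D=0$, the recurrence \eqref{eq:iteration} becomes $\norm{T^{n+2}x}^2=(1+r^2)\norm{T^{n+1}x}^2-r^2\norm{T^nx}^2$ for every $n\in\Z$, whose nonnegative solutions are $\norm{T^nx}^2=L^+(T,x)+r^{2n}L^-(T,x)$. Polarising, $\ip{T^nx}{T^ny}=\ip{x}{y}_++r^{2n}\ip{x}{y}_-$, so the form $\ip{\cdot}{\cdot}_+$ is $T$-invariant while $\ip{\cdot}{\cdot}_-$ is scaled by $r^2$. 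Hence $T$ induces on $\wt H_+$ an operator $U_+$ that is isometric and, by invertibility of $T$, with dense range, hence unitary, and on $\wt H_-$ an operator $U_-$ with $\norm{U_-\xi}=r\norm{\xi}$ and dense range, i.e.\ $r$ times a unitary, and $WT=(U_+\oplus U_-)W$. Thus $T$ is unitarily equivalent to the restriction of the normal operator $N:=U_+\oplus U_-$ to the invariant subspace $\wt H$; it is therefore subnormal, and since its minimal normal extension is a reducing part of $N$ and $\sigma(N)\subseteq\{\abs z=1\}\cup\{\abs z=r\}$, the spectral projections onto the two disjoint circles split that minimal extension again as a unitary summand plus $r$ times a unitary summand, which is (ii).

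The converse (ii)$\Rightarrow$(i) is immediate: if $N=U_+\oplus U_-$ is such a normal extension and $x=x_+\oplus x_-$, then $T^nx=N^nx$ for $n\ge0$ gives $\norm{T^nx}^2=\norm{x_+}^2+r^{2n}\norm{x_-}^2$, and substituting $n=0,1,2$ into the left-hand side of \eqref{eq:normDx} yields $\norm{Dx}^2=0$ for all $x$. For the final assertion, suppose $T$ is not normal; I must show $\sigma(T)=\cl\A$. One inclusion, $\sigma(T)\ss\cl\A$, is already known, so it suffices to prove $\A\ss\sigma(T)$. Write $T\cong N|_{\wt H}$ with $N=U_+\oplus U_-$ as above, so $\sigma(N)\ss\{\abs z=1\}\cup\{\abs z=r\}$; invertibility gives $N\wt H=\wt H$, hence $\wt H$ is invariant under both $N$ and $N^{-1}$. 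Fix $\lambda\in\A$, so $\lambda\notin\sigma(N)$. If $\lambda\notin\sigma(T)$, then $T-\lambda=(N-\lambda)|_{\wt H}$ is onto $\wt H$, so $\wt H$ is invariant under $(N-\lambda)^{-1}$ as well. The points $0$ and $\lambda$, lying respectively in $\{\abs z<r\}$ and in $\A$, meet every bounded component of $\C\sm\sigma(N)$; by Runge's theorem the function equal to $1$ near $\{\abs z=1\}$ and to $0$ near $\{\abs z=r\}$ is a uniform limit on $\sigma(N)$ of rational functions with poles only at $0$ and $\lambda$. Applying the normal functional calculus, the spectral projection $P$ of $N$ onto its $\{\abs z=1\}$-part is a norm limit of rational functions of $N$ with poles at $0$ and $\lambda$, each of which leaves $\wt H$ invariant; hence $P\wt H\ss\wt H$. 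Consequently $\wt H=M_+\oplus M_-$ with $M_+:=\wt H\cap\wt H_+$ and $M_-:=\wt H\cap\wt H_-$, and $T\cong U_+|_{M_+}\oplus U_-|_{M_-}$; invertibility makes $U_+|_{M_+}$ a unitary and $U_-|_{M_-}$ equal to $r$ times a unitary, so $T$ is normal, a contradiction. Therefore every $\lambda\in\A$ belongs to $\sigma(T)$ and $\sigma(T)=\cl\A$.

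I expect the last paragraph to be the crux. The delicate point is that invariance of $\wt H$ under $N$ and $N^{-1}$ does \emph{not} imply invariance under $(N-\lambda)^{-1}$ for $\lambda\in\A$, since $(z-\lambda)^{-1}$ has no single two-sided Laurent expansion valid on both circles (it has a pole at the interior point $\lambda$); the extra invariance must come from the hypothesis $\lambda\notin\sigma(T)$, and the role of Runge's theorem is precisely to convert it, through one pole in each complementary component, into invariance under the separating projection $P$. A minor technical check, which I would dispatch separately, is that $\{0,\lambda\}$ really meets every bounded component of $\C\sm\sigma(N)$, which follows because any such component lies in $\{\abs z<1\}$ with boundary on the two circles and hence contains $0$ or a point of $\A$.
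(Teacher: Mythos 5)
Your proof is correct. For the equivalence (i)$\Leftrightarrow$(ii) you take essentially the paper's route: with $D=0$ the lifting of Theorem~\ref{thm:lifting} degenerates to the unitary $W\colon\H\to\wt{H}$, the operators $U_\pm$ induced by $T$ on $\wt{H}_\pm$ are, respectively, a unitary and $r$ times a unitary, and $T$ becomes the restriction of the normal operator $N=U_+\oplus U_-$ to the invariant subspace $\wt{H}$; the converse is the same direct computation. (Your detour through the closed-form solution $\norm{T^nx}^2=L^+(T,x)+r^{2n}L^-(T,x)$ of the recurrence \eqref{eq:iteration} is fine but not needed, since $\ip{Tx}{Ty}_+=\ip{x}{y}_+$ and $\ip{Tx}{Ty}_-=r^2\ip{x}{y}_-$ follow at once from the definitions of the limits; on the other hand you are more careful than the paper in checking that the \emph{minimal} normal extension, being a reducing part of $N$, again splits along the two circles.) The genuine difference is the final dichotomy: the paper disposes of it by citing \cite[Theorem~II.2.11~(c)]{Conwaybook}, whereas you prove it from scratch. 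Your argument---if $\lambda\in\A\sm\sigma(T)$, then $\wt{H}$ is invariant under $N$, $N^{-1}$ and $(N-\lambda)^{-1}$; Runge's theorem with poles at $0$ and $\lambda$, which together meet every bounded component of $\C\sm\sigma(N)$, approximates uniformly on $\sigma(N)$ the function separating the two circles; hence the spectral projection onto the unit-circle part leaves $\wt{H}$ invariant, so $\wt{H}=M_+\oplus M_-$ and $T$ splits as a unitary plus $r$ times a unitary, i.e.\ $T$ is normal---is sound, including the two delicate points you isolate (that $\{0,\lambda\}$ meets each bounded component of $\C\sm\sigma(N)$, and that the invariance under $(N-\lambda)^{-1}$ comes precisely from $\lambda\notin\sigma(T)$). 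What your route buys is self-containedness and the sharper conclusion that the absence of a \emph{single} point of $\A$ from $\sigma(T)$ already forces $T$ to be normal; what the paper's citation buys is brevity, at the cost of invoking the general spectral theory of subnormal operators (hole-filling for the spectrum of the minimal normal extension).
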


\begin{proof}
Assume that $\al(T^*,T) = 0$. In particular $T \in \BC_\al$.  
Note that $W_+\colon\H\to\wt{H}_+$ is onto. 
Since $W_+x=W_+Tx$ for all $x\in\H$, the formula $U_+W_+x=W_+Tx$ 
defines an isometric operator $U_+$ on $\wt{H}_+$. 
Since the range of $U_+$ is $\wt{H}_+$, $U_+$ is unitary. 
Analogously, the operator $U_-$ on $\wt{H}_-$ given by $U_-W_-x=W_-Tx$ 
satisfies that $r^{-1}U_-$ is a unitary operator. 
Set $U:=U_+ \oplus U_-$, which acts on $\wt{H}_+ \oplus \wt{H}_-$. 
Then $U$ is a normal operator with 
\[
\sigma(U)\ss\{ \abs{z} = r \}\cup\{ \abs{z} = 1 \}. 
\]
Since $\FD={0}$, Theorem~\ref{thm:lifting} gives that 
$W\colon\H\to\wt{H}$ is a unitary operator. 
Therefore $T$ is unitarily equivalent to the restriction of $U$ to $\wt{H}$, 
which has the desirable properties of (ii).

Now suppose that $T$ satisfies (ii). 
Let $U=U_+ \oplus U_-$ be its minimal unitary extension.  
Then $U$ is bounded and $\sigma(U)$ is contained in 
the union of the circles $\{\abs{z}=r\}$ and $\{\abs{z}=1\}$. 
Write each vector $x$ in the space where $U$ acts as a pair $(x_+,x_-)$, 
so that $Ux=U_+x_+ + U_-x_-$. 
Then 
\[
\norm{Ux}^2 = \norm{U_+x_+}^2 + \norm{U_-x_-}^2 = \norm{x_+}^2 + r^2\norm{x_-}^2. 
\]
Hence, a straightforward computation shows that 
\[
(1+r)^2 \norm{Ux}^2 - \norm{U^2x}^2 - r^2 \norm{x}^2 = 0,
\]
so $\al(U^*,U) = 0$. Since $T$ is a part of $U$, (i) follows. 

The last statement follows from \cite[Theorem~II.2.11~(c)]{Conwaybook}.
\end{proof}

The last result we need to prove Theorem~\ref{thm:main-b} is the following. 

\begin{proposition}
\label{prop-C-alpha}
For any Hilbert space $\E,$ the operator 
\[
M_z^t\colon H^2_{r}(\B,\E))\to H^2_{r}(\B,\E))
\]
given by \eqref{eq:Mzt} belongs to $\BC_\al$. 
\end{proposition}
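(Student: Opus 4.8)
The plan is to make the operator $M_z^t$ completely explicit as a bilateral weighted shift on the coefficient data of $H^2_r(\B,\E)$ and then verify the defining inequality \eqref{eq:normDx} by a direct norm computation. First I would fix $f\in H^2_r(\B,\E)$ with two-sided coefficient sequence $(f_n)_{n\in\Z}$ and read off the coefficients of $g:=M_z^t f$ from \eqref{eq:Mzt}. Multiplication by $z$ carries the coefficient of $z^{n-1}$ to that of $z^n$, producing an unwanted constant in the expansion on $\{\abs{z}>1\}$; subtracting $(zf)\vert_{z=\infty}$, viewed as a constant on all of $\B$, cancels that constant on $\{\abs{z}>1\}$ and at the same time alters the constant term on $\{\abs{z}<r\}$, so that the output again lies in $H^2_r(\B,\E)$. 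The net effect is $g_n=f_{n-1}$ for every $n\ne 0$ and $g_0=-f_{-1}$. Thus, up to unimodular factors irrelevant to any norm, $M_z^t$ is the bilateral weighted shift whose weights equal $1$ on the non-positive indices and $r$ on the positive indices. Since the weights lie between $r$ and $1$, the operator is bounded and bounded below, hence invertible, which is the invertibility required for membership in $\BC_\al$ (and incidentally gives $\norm{M_z^t}\le 1$ and $\norm{(M_z^t)^{-1}}\le 1/r$).

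The second step is to evaluate the three norms entering $\al$. Writing $A:=\sum_{n\ge 0}r^{2n}\norm{f_n}^2$ and $B:=\sum_{n\le -1}\norm{f_n}^2$, so that $(1-r^2)\norm{f}^2=A+B$, substituting the coefficient formula for $g$ into the definition of the norm on $H^2_r(\B,\E)$ yields, after the telescoping simplifications,
\[
(1-r^2)\norm{M_z^t f}^2=r^2 A+B,
\qquad
(1-r^2)\norm{(M_z^t)^2 f}^2=r^4 A+B-(1-r^2)\norm{f_{-1}}^2 .
\]
The single surviving boundary term $\norm{f_{-1}}^2$ arises from the index-$0$ transition of the shift and is the heart of the computation.

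Assembling these, and multiplying the combination $(1+r^2)\norm{M_z^t f}^2-\norm{(M_z^t)^2 f}^2-r^2\norm{f}^2$ by $1-r^2$, the coefficients of $A$ cancel since $(r^2+r^4)-r^4-r^2=0$, the coefficients of $B$ cancel since $(1+r^2)-1-r^2=0$, and only $(1-r^2)\norm{f_{-1}}^2$ is left. Hence
\[
(1+r^2)\norm{M_z^t f}^2-\norm{(M_z^t)^2 f}^2-r^2\norm{f}^2=\norm{f_{-1}}^2\ge 0,
\]
which is exactly \eqref{eq:normDx} for $T=M_z^t$ and therefore establishes $\al\big((M_z^t)^*,M_z^t\big)\ge 0$. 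Together with the invertibility noted above this proves $M_z^t\in\BC_\al$; as a byproduct, the defect operator of $M_z^t$ simply extracts the coefficient $f_{-1}$, i.e.\ the residue at infinity.

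The step I expect to be most delicate is the first one: correctly determining the action of $M_z^t$ on coefficients, and in particular the index-$0$ term. The subtraction $(zf)\vert_{z=\infty}$ must be tracked so that the output genuinely lies in $H^2_r(\B,\E)$ — only negative powers for $\abs{z}>1$ and only non-negative powers for $\abs{z}<r$ — and miscounting this single transition term is the one place where the argument could go astray. It is also precisely this term that survives all the cancellations and produces the non-negative defect, so getting it right is both the obstacle and the punchline; the remaining norm identities are routine telescoping.
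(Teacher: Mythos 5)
Your proof is correct and takes essentially the same route as the paper: identify $M_z^t$ with a bilateral (weighted) shift on the coefficient sequence and compute the left-hand side of \eqref{eq:normDx} directly, with all terms cancelling except the squared norm of a single coefficient. The only discrepancy is one of labeling: the paper records the surviving term as $\norm{f_0}^2$ while you obtain $\norm{f_{-1}}^2$; under the direct identification of $f$ with its coefficients $\{f_n\}_{n\in\Z}$ your index is the correct one (the paper's $f_0$ matches the relabeled sequence used in the proof of Proposition~\ref{prop:duality-r}, where the entry at index $0$ corresponds to the function coefficient $f_{-1}$), and since the term is non-negative in either labeling, both arguments yield the proposition.
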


\begin{proof}
Identifying $f\in H^2_{r}(\B,\E)$ with the sequence $\{f_n\}_{n\in\Z}$ 
of its coefficients, a straightforward computation gives that 
\[
(1+r^2)\norm{M_z^tf}^2-\norm{(M_z^t)^2f}^2-r^2\norm{f}^2=\norm{f_{0}}^2\ge0. 
\]
Hence $\al((M_z^t)^*,M_z^t)\ge0$. If we view 
$M_z^t$ as a bilateral shift, we get that it is invertible. 
\end{proof}

\begin{proof}[Proof of Theorem~\ref{thm:main-b}]
Assume that $\al(T^*,T) \ge 0$, that is, $T\in\BC_\al$. 
Then we can obtain a normal operator $U=U_+\oplus U_-$ 
as at the beginning of the proof of Theorem~\ref{thm:main2}. 
Let $S$ be the restriction of $U$ to $\wt\H$. Note that $S$ has the desired properties 
of (ii). Moreover, $SW=WT$. Since $M_z^t$, acting on $H^2_{r}(\B,\FD)$, 
satisfies $M_z^t\CO_{T,D}=\CO_{T,D}T$, using Theorem~\ref{thm:lifting} 
we obtain that $T$ is unitarily equivalent to a part of $M_z^t \oplus S$. 

Now assume (ii). Let $U$ be the normal minimal extension of $S$. 
Since $\sigma(U)$ is contained in the union of the circles 
$\{\abs{z}=r\}$ and $\{\abs{z}=1\}$, $U$ can be written as 
an orthogonal sum $U_+\oplus U_-$, where $U_+$ and $r^{-1}U_-$ are unitaries. 
By Theorem~\ref{thm:main2}, $\al(U^*,U)=0$, so we also have $\al(S^*,S)=0$. 
Since $M_z^t$ is in $\BC_\al$ (see Proposition~\ref{prop-C-alpha}) 
and $T$ is a part of $M_z^t \oplus S$, we obtain (i). 
\end{proof}

Notice that Theorem~\ref{thm:main2} can be seen as a particular case of 
Theorem~\ref{thm:main-b}, when the structure involving the defect operator disappears. 

%-----------------------
\section{Dual models}
\label{sec:duals}
%-----------------------

In this section we discuss the duality behind the models involving operators 
$M_z$ acting on spaces of functions on $\A$, on one hand, 
and the models in terms of operators $M_z^t$, acting on spaces of functions on $\B$, 
on the other hand.
In \cite{BYdoms}, we exploit this duality in the context of multiply connected domains. 

\begin{proposition}\label{prop:duality-r}
The operator $M_z^t$ acting on $H^2_r(\B,E)$ is dual to the operator 
$M_z$ acting on $H^2_{r}(\A,\E)$ via the duality 
\[
\langle f, g\rangle = 
\int_{\T} \langle f(z), g(\bar z)\rangle_E\, dz+\int_{r\T} \langle f(z), g(\bar z)\rangle_E\, dz,  
\]
where $f$ is in $H^2_r(\B,E)$ and $g$ is in $H^2_{r}(\A,\E)$. 
\end{proposition}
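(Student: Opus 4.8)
The plan is to reduce the statement to a computation with Laurent coefficients and then to verify the adjoint relation $\langle M_z^t f, g\rangle = \langle f, M_z g\rangle$, which is what ``$M_z^t$ is dual to $M_z$ via this pairing'' means. Represent $f\in H^2_r(\B,\E)$ by its two-sided coefficient sequence $\{f_n\}_{n\in\Z}$, where the $f_n$ with $n\ge0$ are the coefficients of the expansion on $\{\abs{z}<r\}$ and those with $n\le-1$ the ones on $\{\abs{z}>1\}$, and represent $g\in H^2_r(\A,\E)$ by its Laurent series $g(z)=\sum_m g_m z^m$. First I would substitute these expansions into the two boundary integrals defining $\langle f,g\rangle$. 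Using $\overline{\bar z^{\,m}}=z^m$ together with the orthogonality relations $\int_\T z^k\,dz=2\pi i\,\delta_{k,-1}$ and $\int_{r\T}z^k\,dz=2\pi i\,\delta_{k,-1}$, each integral collapses onto a single diagonal, and the pairing becomes a coefficientwise sum $\langle f,g\rangle=2\pi i\sum_n \pm\langle f_n,g_{-n-1}\rangle_E$, the signs depending on the orientation carried by the two circles. From this closed form one reads off boundedness and non-degeneracy: pairing $f_n$ against $g_{-n-1}$ and applying Cauchy--Schwarz with the weights occurring in the norm of $H^2_r(\B,\E)$ produces exactly the weights defining the norm of $H^2_r(\A,\E)$, so the form realizes each space as the (conjugate) dual of the other. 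This settles that we are dealing with a genuine duality.

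The substantive point is the intertwining identity. Since $(M_z g)(\bar z)=\bar z\,g(\bar z)$ and the inner product of $\E$ is conjugate-linear in its second slot, one has $\langle f(z),\bar z\,g(\bar z)\rangle_E=z\,\langle f(z),g(\bar z)\rangle_E$; hence $\langle f,M_z g\rangle$ is obtained from $\langle f,g\rangle$ simply by inserting an extra factor $z$ in the integrand, which in coefficients is the shift $g_{-n-1}\mapsto g_{-n-2}$. On the other side, $M_z^t f=zf-(zf)\vert_{z=\infty}$, so $\langle M_z^t f,g\rangle$ equals the same ``multiply by $z$'' integrand minus the contribution of the constant $(zf)\vert_{z=\infty}=f_{-1}$. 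Comparing the two resulting diagonal sums, they agree termwise after the reindexing $n\mapsto n-1$ at every index except the single one produced by that constant, so the identity reduces to checking that the subtracted term accounts exactly for the one exceptional index.

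I expect this last cancellation to be the only real obstacle, and the definition of $M_z^t$ is tailored precisely for it. Multiplication by $z$ carries $f$ out of $H^2_r(\B,\E)$ because the outer branch of $zf$ acquires the nonzero value $f_{-1}$ at infinity; subtracting $(zf)\vert_{z=\infty}$ projects back into the space, but at the cost of altering the single coefficient at the origin, where one finds $(M_z^t f)_0=-f_{-1}$ rather than $f_{-1}$. I would isolate this index, verify the sign, and check that it annihilates the boundary term $\int \langle f_{-1},g(\bar z)\rangle_E\,dz$ created by moving $z$ across the pairing. For this to close, each circle must be given the orientation it inherits as a boundary component, so that $\T$ and $r\T$ enter with opposite orientations and the two pieces of the infinity term cancel rather than reinforce; this is exactly the place where a careless treatment of orientation would leave a spurious $\langle f_{-1},g_{-1}\rangle_E$ and break the identity. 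Once the exceptional index is disposed of, every remaining index matches by the plain reindexing, yielding $\langle M_z^t f,g\rangle=\langle f,M_z g\rangle$ and hence the asserted duality.
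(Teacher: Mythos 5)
Your proof is correct, and its core is the same coefficient computation that the paper compresses into one line (``Now an easy computation shows that the statement holds''); but you make explicit a point that the paper's argument skips entirely, and that point is genuinely needed. The paper passes to sequence models: it identifies $M_z^t$ on $H^2_r(\B,\E)$ with the backward shift $B$ on $\ell^2_r(\E)$, identifies $M_z$ on $H^2_r(\A,\E)$ with the forward shift $F$ on $\ell^2_{r,-}(\E)$, writes the pairing as $\sum_n\langle f_n,g_{-n-1}\rangle_\E$, and stops there. What this hides is that the identification of $M_z^t$ with $B$ is not the naive coefficient map: since the correction at infinity is subtracted from both branches, one has $(M_z^tf)_0=-f_{-1}$, so any intertwining unitary must combine the reflection $n\mapsto-n-1$ with a sign change on half of the indices. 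Your direct treatment of the integrals locates exactly this subtlety where it belongs: the discrepancy $\langle f,M_zg\rangle-\langle M_z^tf,g\rangle$ equals the pairing of the constant function $(zf)\vert_{z=\infty}=f_{-1}$ against $g$, namely
\[
\Bigl(\int_\T+\int_{r\T}\Bigr)\langle f_{-1},g(\bar z)\rangle_\E\,dz
=(1\pm1)\,2\pi i\,\langle f_{-1},g_{-1}\rangle_\E,
\]
the sign depending on whether the two circles are equally or oppositely oriented. So with both circles counterclockwise the identity is literally false (test $f$ with only $f_{-1}\neq0$ against $g(z)=z^{-1}$), and the proposition holds precisely when $\T$ and $r\T$ carry opposite orientations, e.g.\ the boundary orientation of $\A$ --- exactly as you say. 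This orientation hypothesis is left implicit in the paper, and your proof supplies it. One correction to a side remark: the claim that the form ``realizes each space as the (conjugate) dual of the other'' is too strong. With the stated norms, the products of the weights along the pairing diagonal are $r^{2n}$ and $r^{2n+2}$, which tend to $0$; hence the pairing is bounded and non-degenerate, so each space embeds injectively with dense range into the conjugate dual of the other, but these embeddings are not onto. This does not damage your argument, since the content of the proposition is precisely the intertwining identity $\langle M_z^tf,g\rangle=\langle f,M_zg\rangle$ with respect to a non-degenerate pairing, and that is what you verify.
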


\begin{proof}
It is easy to see that $M_z^t$ acting on $H^2_r(\B,\E)$ is unitarily equivalent to 
the backward shift 
\[
B\colon\ell^2_r(\E)\to\ell^2_r(\E), \quad B(\{f_n\}_{n\in\Z})=\{f_{n+1}\}_{n\in\Z}, 
\]
where $\ell^2_r(\E)$ is the space of sequences $\{f_n\}_{n\in\Z}$ in $\E$ with finite norm 
\[
\norm{\{f_n\}_{n\in\Z}}_{\ell^2_r(\E)}:=\frac{1}{1-r^2}\left(
\sum_{n=0}^{\infty}\norm{f_n}^2+\sum_{n=-\infty}^{-1}r^{-2n-2}\norm{f_n}^2
\right). 
\]
In the same way, $M_z$ acting on $H^2_{r}(\A,\E)$ is unitarily equivalent to 
the forward shift 
\[
F\colon\ell^2_{r,-}(\E)\to\ell^2_{r,-}(\E), \quad F(\{g_n\}_{n\in\Z})=\{g_{n-1}\}_{n\in\Z}, 
\]
where $\ell^2_{r,-}(\E)$ is the space of sequences $\{g_n\}_{n\in\Z}$ in $\E$ with finite norm 
\[
\norm{\{g_n\}_{n\in\Z}}_{\ell^2_{r,-}(\E)}:=(1-r^2)\left(
\sum_{n=0}^{\infty} \frac{1}{r^{2n}}\norm{g_n}^2+\sum_{n=-\infty}^{-1} \norm{g_n}^2
\right). 
\]
Finally, the duality can be written as 
\[
\ip{\{f_n\}}{\{g_n\}}=\sum_{n\in\Z}\ip{f_n}{g_{-n-1}}_\E, 
\]
where $\{f_n\}_{n\in\Z}\in\ell^2_r(\E)$ and $\{g_n\}_{n\in\Z}\in\ell^2_{r,-}(\E)$. 
Now an easy computation shows that the statement holds. 
\end{proof}

Therefore, as an immediate consequence of Theorem~\ref{thm:main-b} 
and Proposition~\ref{prop:duality-r}, we obtain a dual model for operators 
in the class $\BC_\al$. 

\begin{theorem}[Dual model theorem]\label{thm-1-2}
Suppose that $T$ is invertible. 
Then $T$ is in $\BC_\al$ if and only if $T^*$ is a 
compression of an operator of the form 
\[
(M_z \text{ on } H^2_{r}(\A,\E))\oplus U_0\oplus rU_1,
\] 
where $\E$ is a Hilbert space, and $U_0$ and $U_1$ are unitary operators.
\end{theorem}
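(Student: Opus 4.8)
The plan is to derive Theorem~\ref{thm-1-2} by combining the model theorem (Theorem~\ref{thm:main-b}) with the duality established in Proposition~\ref{prop:duality-r}. The guiding principle is that \emph{being a part of an operator} and \emph{being a compression of the dual operator} are adjoint notions. First I would recall the precise relationship: if $T$ is unitarily equivalent to a part of an operator $A$, meaning the restriction of $A$ to an invariant subspace that is itself the restriction to a reducing piece (equivalently, $T$ is unitarily equivalent to $PA|_{\mathcal{M}}$ with $\mathcal{M}$ invariant, after splitting off a reducing summand), then $T^*$ is unitarily equivalent to a compression of $A^*$, and conversely. Thus the key translation is that taking adjoints interchanges the roles of ``part of'' and ``compression of.''

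Next I would apply Theorem~\ref{thm:main-b}. By that theorem, $T\in\BC_\al$ if and only if $T$ is unitarily equivalent to a part of
\[
A:=(M_z^t \text{ on } H^2_{r}(\B,\E))\oplus S,
\]
where $S$ is subnormal with minimal unitary extension supported on $\{\abs{z}=r\}\cup\{\abs{z}=1\}$. Taking adjoints and using the observation above, this is equivalent to $T^*$ being a compression of $A^*=(M_z^t)^*\oplus S^*$. Now Proposition~\ref{prop:duality-r} identifies $M_z^t$ on $H^2_r(\B,\E)$ as the dual (adjoint, under the stated pairing) of $M_z$ on $H^2_r(\A,\E)$; hence $(M_z^t)^*$ is unitarily equivalent to $M_z$ on $H^2_r(\A,\E)$. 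It remains to rewrite the summand $S^*$ in the desired form. Since the minimal unitary extension of $S$ is $U_+\oplus U_-$ with $U_+$ and $r^{-1}U_-$ unitary (as in Theorem~\ref{thm:main2}), the adjoint $S^*$ is again subnormal of the same type, and after absorbing the subnormal part into a compression one may take the extension itself: $S^*$ compresses from the normal operator $U_+^*\oplus U_-^*$. Writing $U_0:=U_+^*$ and observing $U_-^*=r(r^{-1}U_-)^*=r U_1$ with $U_1:=(r^{-1}U_-)^*$ unitary, the second summand takes exactly the form $U_0\oplus rU_1$.

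The main obstacle I anticipate is bookkeeping around the phrase ``compression'' versus ``part,'' and specifically whether the subnormal operator $S$ can be replaced by its minimal unitary extension without loss. The cleanest route is to note that a compression of $A^*$ to a semi-invariant (equivalently, coinvariant-mod-invariant) subspace can be enlarged: since $S^*$ is itself already a compression of the normal operator $U_+^*\oplus U_-^*$, composing compressions yields a compression of the larger operator $M_z\oplus U_0\oplus rU_1$. One must check that this composition of semi-invariant subspace passages again yields a semi-invariant subspace, which is a standard fact but should be stated explicitly. The converse direction is symmetric: if $T^*$ is a compression of $M_z\oplus U_0\oplus rU_1$, then dualizing via Proposition~\ref{prop:duality-r} exhibits $T$ as a part of $M_z^t\oplus S$ with $S$ of the required type, and Theorem~\ref{thm:main-b} gives $T\in\BC_\al$. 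I would close by remarking that this is precisely the structure of Theorem~A, so the dual model realizes $T^*$ in the classical Sz.-Nagy--Foias-type form.
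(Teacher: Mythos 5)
Your overall route---dualize Theorem~\ref{thm:main-b} through Proposition~\ref{prop:duality-r} and replace $S^*$ by a compression of $U_+^*\oplus U_-^*$---is exactly the paper's intended argument (the paper states Theorem~\ref{thm-1-2} as an immediate consequence of those two results). However, your resolution of the ``compression versus part'' issue, which you correctly identify as the crux, uses the wrong class of subspaces, and this leaves a genuine gap in the converse direction. The equivalence ``$T$ is a part of $A$ if and only if $T^*$ is a compression of $A^*$'' holds only when the compression is to a subspace $\mathcal{M}$ that is \emph{coinvariant} for $A^*$ (equivalently, invariant for $A$); then $T^*=P_{\mathcal{M}}A^*|_{\mathcal{M}}$ dualizes back to the genuine restriction $T=A|_{\mathcal{M}}$. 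For a compression to a merely \emph{semi-invariant} subspace---the setting you adopt when you invoke Sarason-type composition of ``semi-invariant subspace passages''---dualizing yields only another semi-invariant compression, not a part, so Theorem~\ref{thm:main-b}(ii) cannot be applied. This distinction is not pedantic: membership in $\BC_\al$ is a hereditary positivity, which survives restriction to an invariant subspace (there $T^nx=A^nx$ exactly) but not semi-invariant compression, because the terms $(1+r^2)\norm{Tx}^2$ and $-\norm{T^2x}^2$ move in opposite directions when norms shrink under a projection. In fact, under the semi-invariant reading the statement itself becomes false: the matrix $T_1$ from the proof of Theorem~\ref{thm:strictInclusions} is an invertible contraction with $\al(T_1^*,T_1)\not\ge0$, yet $T_1^*$, being a contraction, is by the Sz.-Nagy dilation theorem together with Sarason's lemma the compression of a unitary $U_0$ to a semi-invariant subspace, hence a semi-invariant compression of an operator of the displayed form (take the $M_z$ and $rU_1$ summands trivial). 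So your converse step (``dualizing \dots exhibits $T$ as a part of $M_z^t\oplus S$'') is valid only if ``compression'' is read throughout as ``compression to a coinvariant subspace,'' as in Theorem~A; with that reading fixed, your argument closes and coincides with the paper's.

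Two smaller points. First, your assertion that $(M_z^t)^*$ is \emph{unitarily} equivalent to $M_z$ on $H^2_{r}(\A,\E)$ does not follow from duality alone: a bounded nondegenerate pairing only makes the dual pair similar. What saves it here is that the Riesz identification implementing the pairing of Proposition~\ref{prop:duality-r} is a unitary from $\ell^2_{r,-}(\E)$ onto $\ell^2_r(\E)$, because the weights of the two spaces are exactly reciprocal under the index flip $n\mapsto -n-1$ used in the pairing; this deserves a line, since mere similarity would not suffice for a conclusion phrased in terms of unitary equivalence of compressions. Second, the remark that ``$S^*$ is again subnormal of the same type'' is false in general (the adjoint of a subnormal operator need not be subnormal); fortunately you never use it---the operative fact is only that $S=N|_K$ with $K$ invariant for $N=U_+\oplus U_-$, so that $S^*=P_KN^*|_K$ is the compression of $U_+^*\oplus U_-^*$ to the coinvariant subspace $K$.
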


In what follows, we will use the Hilbert 
spaces $H^2(\A, \E, \w)$, defined in~\eqref{eq:norm-omega}, 
in the scalar-valued case when  $\E=\C$. Putting $\om=a>0$, 
we get the Hilbert norms  
\[
\norm{f}_{H^2(\A,\C,a)}^2=\int_\T\abs{f(z)}^2\,\abs{dz}
+a\int_{r\T}\abs{f(z)}^2\,\abs{dz}. 
\]
Notice that $\{z^n\}_{n\in\Z}$ is an orthogonal basis in $H^2(\A,\C,a)$ with 
\[
\norm{z^n}_{H^2(\A,\C,a)}^2=1+a^2r^{2n}
\] 
for all $n\in\Z$. 
Therefore, in terms of the coefficients $\{f_n\}_{n\in\Z}$ of the Laurent series 
of $f$ in $\A$ (that is, $f(z)=\sum_{n\in\Z}f_nz^n$), we have 
\[
\norm{f}_{H^2(\A,\C,a)}^2=\sum_{n=-\infty}^{\infty}(1+a^2r^{2n})\abs{f_n}^2. 
\]
Now consider the space $H^2(\B,\C,a)$ of all functions $f\colon\B\to\C$ 
given by 
\[
\sum_{n=0}^{\infty} f_nz^n \ \text{ for } \lvert z\rvert<r, 
\qquad
\sum_{n=-\infty}^{-1} f_nz^n \ \text{ for } \lvert z\rvert>1,
\]
equipped with the Hilbert space norm 
\[
\norm{f}_{H^2(\B,\C,a)}^2 := \sum_{n=-\infty}^{\infty} \frac{1}{1+a^2r^{-2n-2}}\, \abs{f_n}^2. 
\]
For any Hilbert space $\E$, set 
\[
H^2(\B,\E,a):=H^2(\B,\C,a)\otimes\E, \quad H^2(\A,\E,a):=H^2(\A,\C,a)\otimes\E. 
\]

\begin{proposition}\label{prop:duality-a}
The operator $M_z^t$ acting on $H^2(\B,E,a)$ is dual to the operator 
$M_z$ acting on $H^2(\A,\E,a)$. 
\end{proposition}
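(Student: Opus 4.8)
The plan is to imitate the proof of Proposition~\ref{prop:duality-r}, replacing its weights by the $a$-weights of $H^2(\A,\E,a)$ and $H^2(\B,\E,a)$. First I would pass to the scalar case $\E=\C$: by definition $H^2(\B,\E,a)=H^2(\B,\C,a)\otimes\E$ and $H^2(\A,\E,a)=H^2(\A,\C,a)\otimes\E$, and both $M_z$ and $M_z^t$ act as the scalar operator tensored with $I_\E$, so the vector-valued duality is the scalar one tensored with the identity on $\E$. In the scalar case $\{z^n\}_{n\in\Z}$ is an orthogonal basis of each space, with $\norm{z^m}_{H^2(\A,\C,a)}^2=1+a^2r^{2m}=:\be_m$ and $\norm{z^n}_{H^2(\B,\C,a)}^2=(1+a^2r^{-2n-2})^{-1}$. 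Thus $M_z$ acts by $z^m\mapsto z^{m+1}$, while \eqref{eq:Mzt} shows that $M_z^t$ acts by $z^n\mapsto z^{n+1}$ for $n\neq-1$ and by $z^{-1}\mapsto -z^0$, the minus sign coming from the subtraction of the value at infinity.

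I would realize the duality by the explicit index-reversing map
\[
V\colon H^2(\A,\C,a)\to H^2(\B,\C,a),\qquad Vz^m=\ep_m\,\be_m\,z^{-m-1},
\]
where $\ep_m=1$ for $m\ge0$ and $\ep_m=-1$ for $m\le-1$. The whole statement hinges on the single weight identity
\[
\norm{z^{-m-1}}_{H^2(\B,\C,a)}^2=\bigl(1+a^2r^{-2(-m-1)-2}\bigr)^{-1}=\bigl(1+a^2r^{2m}\bigr)^{-1}=\frac1{\be_m}.
\]
From it, $\norm{Vz^m}_{H^2(\B,\C,a)}=\be_m^{1/2}=\norm{z^m}_{H^2(\A,\C,a)}$, so $V$ sends the orthogonal basis $\{z^m\}$ isometrically onto the orthogonal basis $\{z^{-m-1}\}=\{z^n\}$ and is therefore unitary. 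Conceptually, the same identity says that the pairing of the shape used in Proposition~\ref{prop:duality-r}, namely $\ip{f}{g}=\sum_{n}\ip{f_n}{g_{-n-1}}$, puts the two spaces in perfect duality, and $V$ is precisely the associated identification realizing $H^2(\B,\C,a)$ as the dual of $H^2(\A,\C,a)$.

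It then remains to verify that $V$ intertwines the Hilbert-space adjoint of $M_z$ with $M_z^t$, that is $M_z^tV=VM_z^*$; this is the precise meaning of ``$M_z^t$ is dual to $M_z$'', and is what one needs, via Proposition~\ref{prop:duality-r}, to pass from Theorem~\ref{thm:main-b} to Theorem~\ref{thm-1-2}. Since $M_z^*z^m=(\be_m/\be_{m-1})\,z^{m-1}$, a one-line comparison gives $VM_z^*z^m=\ep_{m-1}\be_m z^{-m}$ and $M_z^tVz^m=\ep_m\be_m\,M_z^tz^{-m-1}$; for $m\neq0$ both equal $\ep_m\be_m z^{-m}$, because $\ep_m=\ep_{m-1}$ there and $M_z^tz^{-m-1}=z^{-m}$.

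The one delicate point—and the only place where this differs from pure bookkeeping—is the junction index $m=0$, where the inner and outer expansions meet. There $M_z^tz^{-1}=-z^0$ carries the extra minus sign coming from the constant $-(zf)\vert_{z=\infty}$ in \eqref{eq:Mzt}, and it must be cancelled by the sign jump $\ep_0/\ep_{-1}=-1$ of $V$; indeed both $VM_z^*z^0$ and $M_z^tVz^0$ then equal $-\be_0z^0$. Choosing the phases $\ep_m$ exactly so as to absorb this sign is the crux of the argument; once it is done, $M_z^tV=VM_z^*$ holds on every basis vector and the proposition follows, all the remaining index manipulation being identical to that already carried out in Proposition~\ref{prop:duality-r}.
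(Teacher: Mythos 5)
Your proof is correct, and its mathematical core is the same one the paper intends: the paper's own proof of this proposition simply defers to Proposition~\ref{prop:duality-r}, and in both cases everything rests on the reciprocal-weight identity you isolate, $\norm{z^{-m-1}}_{H^2(\B,\C,a)}^2=(1+a^2r^{2m})^{-1}=\norm{z^m}_{H^2(\A,\C,a)}^{-2}$, combined with the index reversal $m\mapsto-m-1$. The difference is packaging: the paper phrases duality as a transpose relation with respect to the sesquilinear pairing $\ip{f}{g}=\sum_n\ip{f_n}{g_{-n-1}}$, while you encode the same data in an explicit unitary $V$ and verify the intertwining $M_z^tV=VM_z^*$ on the basis. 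The two formulations are equivalent ($V$ is essentially the Riesz identification attached to the pairing), but yours buys something concrete: it delivers directly the statement that the paper actually uses downstream, in the proofs of Theorems~\ref{thm:strictInclusions} and~\ref{thm:sqrt2}, namely that $M_z^t$ on $H^2(\B,\E,a)$ is \emph{unitarily} equivalent to the Hilbert-space adjoint of $M_z$ on $H^2(\A,\E,a)$. This matters because membership in $\BC_\al$ is a unitary invariant but not a similarity invariant, so a duality map that were merely bounded and invertible would not suffice for those applications.

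Your handling of the junction sign is also a genuine point of care rather than pedantry. Reading \eqref{eq:Mzt} literally, $M_z^tz^{-1}=-z^0$, so $M_z^t$ is the bilateral shift only up to a sign flip at one position; with the unsigned coefficient pairing above, the transpose identity $\ip{M_z^tf}{g}=\ip{f}{M_zg}$ then fails exactly by the cross term $2\ip{f_{-1}}{g_{-1}}$. Your phases $\ep_m$ absorb precisely this defect. Equivalently, one could put a minus sign on the $r\T$ integral of the pairing (reflecting the opposite orientations the two circles inherit as boundary of $\A$ versus boundary of $\B$), or first conjugate $M_z^t$ by the diagonal sign unitary that turns it into the exact bilateral shift, which is what the paper implicitly does when it calls $M_z^t$ a bilateral shift. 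With that caveat noted, your argument is complete and, on this point, more careful than the source.
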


\begin{proof}
The argument is essentially the same as in the proof of Proposition~\ref{prop:duality-r}. 
Indeed, the same duality works. Hence we omit it. 
\end{proof}

\begin{lemma}\label{lem:MztinCa}
The operator $M_z^t$ acting on $H^2(\B,E,a)$ is in $\BC_\al$. 
\end{lemma}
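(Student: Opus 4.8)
The plan is to mimic the proof of Proposition~\ref{prop-C-alpha} and verify the membership criterion~\eqref{eq:normDx} directly at the level of Laurent coefficients. First I would identify $f\in H^2(\B,\E,a)$ with its coefficient sequence $\{f_n\}_{n\in\Z}$ and compute the action of $M_z^t$. Multiplication by $z$ shifts the coefficients, while subtracting $\big(zf(z)\big)\vert_{z=\infty}=f_{-1}$ only modifies the constant term in the disc $\{\abs{z}<r\}$; a short computation then gives $(M_z^tf)_n=f_{n-1}$ for $n\neq 0$ and $(M_z^tf)_0=-f_{-1}$. The sign is irrelevant for the norm, so, writing $w_n:=(1+a^2r^{-2n-2})^{-1}$ for the weights defining $\norm{\cdot}_{H^2(\B,\C,a)}$, one obtains $\norm{(M_z^t)^jf}^2=\sum_{k\in\Z}w_{k+j}\norm{f_k}^2$ for $j=0,1,2$.

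Assembling these three expressions reduces the problem to a pointwise inequality:
\[
(1+r^2)\norm{M_z^tf}^2-\norm{(M_z^t)^2f}^2-r^2\norm{f}^2=\sum_{k\in\Z}c_k\norm{f_k}^2,\qquad c_k:=(1+r^2)w_{k+1}-w_{k+2}-r^2w_k,
\]
so that $\al((M_z^t)^*,M_z^t)\ge0$ follows once I show $c_k\ge0$ for every $k$. This last inequality is the only real work. Substituting the weights and setting $s:=a^2r^{-2k-2}>0$ and $u:=r^{-2}>1$, I would multiply $c_k$ by the positive quantity $u(1+s)(1+su)(1+su^2)$ and expand; I expect the terms of degree $0$ and $2$ in $s$ to cancel, leaving the residual $s\,(u-1)^2(u+1)>0$. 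Thus $c_k>0$ and the inequality holds.

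The expansion in the previous step is the main obstacle, in the sense that it is where the specific choice of weights $w_n$ is used, but it is a finite algebraic identity rather than a conceptual difficulty. Once it is in hand, it only remains to check that $M_z^t$ is invertible, as required for membership in $\BC_\al$. Here I would note that, viewed as a bilateral weighted shift, $M_z^t$ has weights whose squares $w_{k+1}/w_k=(1+s)/(1+su)$ range over $(r^2,1)$ and are therefore bounded away from $0$; this makes the shift invertible, exactly as in Proposition~\ref{prop-C-alpha}, and completes the proof.
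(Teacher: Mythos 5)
Your proof is correct and follows essentially the same route as the paper: identify $M_z^t$ with a weighted bilateral forward shift (the sign flip at the zeroth coefficient being harmless for norms), reduce $\al((M_z^t)^*,M_z^t)\ge0$ to the pointwise inequality $c_k\ge0$ on the weights, and verify it by clearing denominators, with invertibility coming from the weights being bounded away from zero. Incidentally, your residual $s(u-1)^2(u+1)$ is the correct one: in the paper's variables $\rho=r^2$, $x=a^2r^{-2n-6}$ the numerator should read $x(\rho-1)^2(\rho+1)$, i.e.\ the paper's displayed identity drops the (positive, hence harmless) factor $x$.
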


\begin{proof}
Identifying functions $f$ in $H^2(\B,E,a)$ with the sequence $\{f_n\}_{n\in\Z}$ of 
its coefficients, it is immediate that $M_z^t$ can be identified with the forward shift 
$F$ acting on the weighted space $\ell_\om^2(\Z)$ of bilateral sequences with  
\[
\om_n=\frac{1}{1+a^2r^{-2n-2}}. 
\]
Note that $F$ satisfies \eqref{eq:normDx} if 
\[
\frac{1+r^2}{1+ar^{-2n-4}}-\frac 1 {1+ar^{-2n-6}}- \frac{r^2}{1+ar^{-2n-2}}
\ge 0 
\]
for all $n\in \Z$. Setting $r^2=\rho$ and $ar^{-2n-6}=x$, this follows from 
\[
\frac{1+\rho}{1+x\rho}
-\frac 1 {1+x}
- \frac{\rho}{1+x\rho^2}
=\frac{(\rho-1)^2(\rho+1)}{(1+x)(1+x\rho)(1+x\rho^2)}
\ge 0.
\]
Hence $F$ is in $\BC_\al$, as we wanted to prove. 
\end{proof}

We remark that a special role of spaces 
$H^2(\B,E,a)$ for the values $a=r^{2m}$, $m\in \mathbb{Z}$, 
was observed in \cite{McCS12} (see Proposition ~2.2 of that work). 
Namely, for these (and only these) values, 
the corresponding reproducing kernel $k(z,w)$ does not vanish on $\A\times \A$. 
The commutant lifting theorem given in \cite{McCS12} 
involves the operator class, defined in terms of 
the operator $M_z$ on $H^2(\B,E,a)$, where $a$ takes one of these special values.

\begin{proof}[Proof of Theorem~\ref{thm:strictInclusions}]
First, notice that the inclusion $\BC_\al\ss\BC_{1,r}$ has already been proved in 
Proposition~\ref{prop:Tcontraction}. 
Now take $T\in\Sp(\A)$ and let us see that $T\in\BC_\al$. 
Since $\cl\A$ is also $1$-spectral for $T^*$, by Theorem~A we know that  
$T^*$ is unitarily equivalent to a compression of the operator 
\[
(M_z \text{ on }H^2(\A,\E, \om))\oplus U_1\oplus rU_2
\] 
to its coinvariant 
subspace, where $\om\in L(\E)$ is a weight and $U_1$, $U_2$ are unitaries. 
Therefore $T$ is unitarily equivalent to a part of the operator 
\[
\big(M_z \text{ on }H^2(\A,\E,\om)\big)^*\oplus U_1^*\oplus rU_2^*.
\] 
Hence, it suffices to check that any operator of this form is in $\BC_\al$. 
It is clear that the operator $N=U_1^*\oplus rU_2^*$ is in $\BC_\al$ (indeed $\al(N^*, N)=0$).  
Let us see now that the operator $M_z$ on $H^2(\A,\E,\om)$ is also in $\BC_\al$. 
By the spectral theorem, $\om\in L(\E)$ is unitarily equivalent to the multiplication operator 
$M_a f(a)= a f(a)$, acting on a direct integral of Hilbert spaces 
\[
\int^\oplus H(a)\, d\mu(a), 
\]
where $\mu$ is a finite Borel measure, concentrated on $\si(\om)$. 
In this representation, the operator $M_z$  on $H^2(\A,\E,\om)$ rewrites as 
\[
\int^\oplus M_z \, d\mu(a)\quad \text{on }\int^\oplus H^2\big(\A,H(a), aI_{H(a)}\big) \, d\mu(a). 
\]
Therefore, it is enough to check only the scalar case. 
That is, we need to prove that the adjoint to $M_z$ on the scalar space 
$H^2(\A,\C, a)$ is in $\BC_\al$ for all $a>0$, which follows from 
Proposition~\ref{prop:duality-a} and Lemma~\ref{lem:MztinCa}. 
Hence, we have proved the inclusion $\Sp\A\ss\BC_\al$. 

Next, we show that the inclusion $\BC_\al\ss\BC_{1,r}$ is strict for all $0<r<1$. 
Consider the matrices 
\[
T_1:=
\left(
\begin{matrix}
1/\sqrt{2}\,  &  \,0 \\
1/2 \,& \, 1/\sqrt{2}
\end{matrix}
\right), 
\quad 
T_2:=
\left(
\begin{matrix}
\sqrt{r}\,  &  \,0 \\
1-r \,& \, \sqrt{r}
\end{matrix}
\right).  
\]
It is easy to check that $\frac 12 \norm{x}\le\norm{T_1 x}\le \norm{x}$ and 
$r\norm{x}\le\norm{T_2x}\le \norm{x}$ for all $x\in H$.  
Now we test the left hand side of \eqref{eq:normDx} with $x=(1,0)$ for both $T_1$ and $T_2$. 
For $T_1$ we obtain the result $-r^2/4$. For $T_2$ we get $(r-1)^2(r^2-3r+1)$. 
Therefore, if $0<r\le1/2$ then $T_1$ is in $\BC_{1,r}$ but not in $\BC_\al$, 
while if $1/2<r<1$ then $T_2$ is in $\BC_{1,r}$ but not in $\BC_\al$. 

Finally, let us see that the inclusion $\Sp\A\ss\BC_\al$ is strict. 
Consider the shifts $B$ and $F$ given in the proof of Proposition~\ref{prop:duality-r}. 
There we established the duality between these two operators. 
Testing the left hand side of \eqref{eq:normDx} for the operator $F$ and 
the vector $f=\{f_n\}_{n\in\Z}$, with $f_0=1$ and $f_n=0$ if $n\neq0$, we have 
\[
\begin{split}
(1+r^2)\norm{Ff}^2-\norm{F^2f}^2-r^2\norm{f}^2
&=
(1-r^2)\left(\frac{1+r^2}{r^2}-\frac{1}{r^4}-r^2\right)\\
&=
\frac{(1-r^2)^2}{r^4}(r^4-1),
\end{split}
\]
which is negative for all $0<r<1$. 
Hence, $B\in\BC_\al$ (see Proposition~\ref{prop-C-alpha}) but $F\notin\BC_\al$. 
Since the class $\Sp\A$ is invariant by passing to the adjoint operator 
(i.e., $T\in\Sp\A$ if and only if $T^*\in\Sp\A$), we obtain that $B\in\BC_\al\sm\Sp\A$. 
\end{proof} 

\begin{remark}
In an earlier version of this paper (published in arxiv), 
we asked whether it is true that $\Sp\A=\{T\in \BC_\al: T^*\in \BC_\al\}$. 
Recently G.~Tsikalas gave a concrete example, showing that 
$\Sp\A$ is strictly smaller than the set 
$\{T\in \BC_\al: T^*\in \BC_\al\}$. 
\end{remark}

%-----------------------
\section{Consequences of the model}
\label{sec:consequences}
%-----------------------

In this section we present some results derived from the explicit model 
obtained in Theorem~\ref{thm:main-b} for operators in $\BC_\al$. 
For instance, we give a concrete value $K$ such that 
$\cl\A$ is completely $K$-spectral for all operators in $\BC_\al$, 
and we establish a characterization of the inclusion of classes $\BC_\al\ss\BC_\be$. 

\begin{proposition}\label{prop:inclusionmapJa}
The inclusion map 
\[
J_a\colon H^2_r(\B,\E)\hookrightarrow H^2(\B,\E,a), \quad J_af=f
\] 
is well-defined. Moreover, it is a bijection and 
\[
\norm{J_a}^2=\frac{1-r^2}{\min\{a^2r^{-2},1\}}, 
\quad 
\norm{J_a^{-1}}^2=\frac{1+a^2r^{-2}}{1-r^2}.
\]
\end{proposition}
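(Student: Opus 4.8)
The plan is to exploit the fact that both $H^2_r(\B,\E)$ and $H^2(\B,\E,a)$ consist of exactly the same formal objects, namely two-sided coefficient sequences $\{f_n\}_{n\in\Z}$ in $\E$ (viewed as functions on $\B$ via the prescribed expansions), and that each of the two norms is \emph{diagonal}, i.e.\ a weighted $\ell^2$-sum of the quantities $\norm{f_n}^2$. Writing $s_n$ for the weight of $\norm{f_n}^2$ in the norm of $H^2_r(\B,\E)$ and $t_n$ for the corresponding weight in $H^2(\B,\E,a)$, namely
\[
s_n=\frac{r^{2n}}{1-r^2}\ (n\ge0), \quad s_n=\frac{1}{1-r^2}\ (n<0), \qquad
t_n=\frac{1}{1+a^2r^{-2n-2}}\ (n\in\Z),
\]
the map $J_a$ acts as the identity on these sequences. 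Hence its operator norm and that of its inverse are governed entirely by the ratios of weights: $\norm{J_a}^2=\sup_{n\in\Z}(t_n/s_n)$ and $\norm{J_a^{-1}}^2=\sup_{n\in\Z}(s_n/t_n)$. Finiteness of both suprema simultaneously yields that $J_a$ is well-defined, bounded, and a bijection with bounded inverse, so I would organize the proof to deduce all three assertions from these two computations.

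For the first norm I would treat the two index ranges $n\ge0$ and $n\le-1$ separately. A short simplification gives $t_n/s_n=(1-r^2)/(r^{2n}+a^2r^{-2})$ for $n\ge0$, which is increasing in $n$ with supremum $(1-r^2)r^2/a^2$ approached as $n\to\infty$; for $n\le-1$ one gets $t_n/s_n=(1-r^2)/(1+a^2r^{-2n-2})$, increasing to $1-r^2$ as $n\to-\infty$. Taking the larger of the two regime-suprema gives $\norm{J_a}^2=\max\{(1-r^2)r^2/a^2,\,1-r^2\}$, and this maximum equals $(1-r^2)/\min\{a^2r^{-2},1\}$; the two branches correspond precisely to the cases $a\le r$ and $a>r$.

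For $\norm{J_a^{-1}}^2$ I would run the reciprocal computation. Here $s_n/t_n=(r^{2n}+a^2r^{-2})/(1-r^2)$ for $n\ge0$, which is \emph{decreasing} in $n$ and hence maximized at $n=0$, giving $(1+a^2r^{-2})/(1-r^2)$; for $n\le-1$ one has $s_n/t_n=(1+a^2r^{-2n-2})/(1-r^2)$, maximized at $n=-1$ with value $(1+a^2)/(1-r^2)$. Since $r^{-2}>1$, the $n=0$ value dominates, so $\norm{J_a^{-1}}^2=(1+a^2r^{-2})/(1-r^2)$, as claimed.

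The computation is elementary throughout; the only point requiring care is the bookkeeping of the two index regimes and correctly locating the monotonicity of each ratio, so that the suprema are read off without error. In particular, I expect the main (mild) subtlety to be noticing that the supremum defining $\norm{J_a}^2$ is attained only in a limit rather than at a single $n$, and that the $\min$ appearing in the final formula arises exactly from comparing the supremum over $n\ge0$ against the supremum over $n\le-1$.
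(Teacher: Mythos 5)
Your proposal is correct and follows essentially the same route as the paper: both reduce the problem to comparing the weights (equivalently, the norms of the orthogonal basis vectors $z^n$) in the two spaces and computing $\sup_n$ of the two ratios, with the supremum for $\norm{J_a}^2$ only approached as $n\to\pm\infty$ and the one for $\norm{J_a^{-1}}^2$ attained at $n=0$. Your write-up is in fact slightly more detailed than the paper's, which leaves the monotonicity analysis of the ratios implicit.
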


\begin{proof}
For the orthogonal basis $\{z^n\}_{n\in\Z}$ we have
\[
\lVert z^n\rVert^2_{H^2(\B,\E,a)}=\frac{1}{1+a^2r^{-2n-2}}
\]
and 
\[
\lVert z^n\rVert^2_{H^2_r(\B,\E)}=
\left\{
\begin{array}{ll}
r^{2n}/(1-r^2) & \text{ if } n\ge 0\\
1/(1-r^2) & \text{ if } n\le -1
\end{array}
\right.. 
\]
Therefore 
\begin{equation}\label{eq:Ja}
\lVert J_{a}\rVert^2=\max
\left\{
\sup_{n\ge0}\frac{1-r^2}{(1+a^2r^{-2n-2})r^{2n}}, 
\sup_{n\le-1}\frac{1-r^2}{1+a^2r^{-2n-2}}
\right\}, 
\end{equation}
and 
\begin{equation}\label{eq:Jainverse}
\lVert J_{a}^{-1}\rVert^2=\max
\left\{
\sup_{n\ge0}\frac{(1+a^2r^{-2n-2})r^{2n}}{1-r^2}, 
\sup_{n\le-1}\frac{1+a^2r^{-2n-2}}{1-r^2}
\right\}. 
\end{equation}
Recall that $0<r<1$. 
Hence, in \eqref{eq:Ja} we need to compare the cases $n\to\infty$ and $n\to-\infty$, 
while in \eqref{eq:Jainverse} the maximum is reached for $n=0$. 
\end{proof}

\begin{theorem}\label{thm:sqrt2}
$\cl{\A}$ is completely $\sqrt{2}$-spectral for all operators in $\BC_\al$. 
\end{theorem}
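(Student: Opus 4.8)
The plan is to use the explicit functional model from Theorem~\ref{thm:main-b} together with the comparison of norms computed in Proposition~\ref{prop:inclusionmapJa}. By Theorem~\ref{thm:main-b}, any $T\in\BC_\al$ is unitarily equivalent to a part of an operator of the form $(M_z^t \text{ on } H^2_r(\B,\FD))\oplus S$, where $S$ is subnormal with minimal normal extension supported on the two circles $\{\abs{z}=r\}\cup\{\abs{z}=1\}$. Since the property of being completely $K$-spectral passes to parts (restrictions to invariant subspaces and compressions do not increase the norm of $f(T)$, while the sup over $\cl\A$ is unchanged), it suffices to establish the bound $K=\sqrt2$ for each of the two summands separately and then combine them on the direct sum.

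First I would dispose of the summand $S$. Its minimal normal extension $U$ is supported on $\partial\A$, so by the spectral theorem $\norm{f(U)}\le\max_{z\in\partial\A}\norm{f(z)}=\max_{z\in\cl\A}\norm{f(z)}$ for any matrix-valued rational $f$ with poles off $\cl\A$ (the maximum modulus principle moves the sup from the boundary to the full closed annulus). Since $S$ is a part of $U$, the same bound holds for $f(S)$ with constant $1$, so $\cl\A$ is completely $1$-spectral for $S$. The genuine work is with $M_z^t$ on $H^2_r(\B,\FD)$.

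For $M_z^t$ the idea is to transfer the problem, via the bijection $J_a$ of Proposition~\ref{prop:inclusionmapJa}, to the space $H^2(\B,\FD,a)$ on which $M_z^t$ is (by Lemma~\ref{lem:MztinCa} and Proposition~\ref{prop:duality-a}) the dual of a bundle-type multiplication operator $M_z$ on $H^2(\A,\FD,a)$, and the latter is a pure subnormal operator for which $\cl\A$ is a $1$-spectral set. The key point is that $J_a$ intertwines $M_z^t$ on the two spaces (both act as the bilateral shift on coefficients), so $J_a f(M_z^t) J_a^{-1}=f(M_z^t)$ computed on $H^2(\B,\FD,a)$, giving
\begin{equation}\label{eq:Kbound}
\norm{f(M_z^t)}_{H^2_r}\le \norm{J_a}\,\norm{J_a^{-1}}\,\norm{f(M_z^t)}_{H^2(\B,\FD,a)}\le \norm{J_a}\,\norm{J_a^{-1}}\max_{z\in\cl\A}\norm{f(z)}.
\end{equation}
Here the last inequality uses that $\cl\A$ is completely $1$-spectral for $M_z^t$ on $H^2(\B,\FD,a)$ (via the duality and Agler's theorem, or directly since it is a bundle shift). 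By Proposition~\ref{prop:inclusionmapJa}, $\norm{J_a}^2\norm{J_a^{-1}}^2=\dfrac{(1-r^2)(1+a^2r^{-2})}{(1-r^2)\min\{a^2r^{-2},1\}}=\dfrac{1+a^2r^{-2}}{\min\{a^2r^{-2},1\}}$, and minimizing this over $a>0$ gives the value $2$ at the balance point $a^2r^{-2}=1$, i.e.\ $a=r$. Thus $\norm{J_a}\,\norm{J_a^{-1}}=\sqrt2$, yielding the constant $\sqrt2$ for the $M_z^t$ summand.

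Finally I would assemble the two pieces. On the orthogonal direct sum $(M_z^t)\oplus S$ one has $f((M_z^t)\oplus S)=f(M_z^t)\oplus f(S)$, so its norm is the maximum of the two summand norms, bounded by $\sqrt2\max_{\cl\A}\norm{f}$. Passing to the part $T$ only decreases the norm, so $\cl\A$ is completely $\sqrt2$-spectral for $T$. The main obstacle I anticipate is making precise the claim that $\cl\A$ is completely $1$-spectral for the auxiliary operator $M_z$ on $H^2(\A,\FD,a)$ and transferring this cleanly through the duality of Proposition~\ref{prop:duality-a} to $M_z^t$ on $H^2(\B,\FD,a)$; one must check that the $K$-spectral constant is genuinely $1$ there (so that the only loss of constant comes from $\norm{J_a}\,\norm{J_a^{-1}}$) and that the intertwining in \eqref{eq:Kbound} is valid for all matrix-valued rational $f$, not merely scalar ones.
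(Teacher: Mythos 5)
Your proposal is correct and follows essentially the same route as the paper: the paper's proof likewise combines Theorem~\ref{thm:main-b}, the complete $1$-spectrality of $M_z$ on $H^2(\A,\FD,a)$ from Theorem~A transferred by duality to $M_z^t$ on $H^2(\B,\FD,a)$, and the similarity $J_a$ of Proposition~\ref{prop:inclusionmapJa}, with $\inf_{a>0}\norm{J_a}\,\norm{J_a^{-1}}=\sqrt{2}$ attained at $a=r$. The only difference is that you make explicit the steps the paper leaves implicit (the harmlessness of the summand $S$ and the passage to parts and direct sums), which is a useful elaboration rather than a different argument.
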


\begin{proof}
Let $T\in\BC_\al$. 
Given $a>0$, the operator $M_z$ acting on $H^2(\A,\FD,a)$ 
has $\cl{\A}$ as a complete $1$-spectral set (see Theorem~A). 
Therefore, its adjoint operator, $M_z^t$ acting on $H^2(\B,\FD,a)$, also 
has $\cl{\A}$ as a complete $1$-spectral set. 
By Theorem~\ref{thm:main-b} and Proposition~\ref{prop:inclusionmapJa}, 
it follows that $\cl{\A}$ is a complete $K_{a}$-spectral set for $T$, where 
$K_{a}:=\lVert J_{a}\rVert\cdot\lVert J_{a}^{-1}\rVert$. 
Using Proposition~\ref{prop:inclusionmapJa} again, we have 
$\inf_{a>0} K_{a} = \sqrt{2}$. This infimum is indeed attained when $a=r$. 
Now the statement follows.
\end{proof}

In the recent preprint \cite{Tsi21}, 
Tsikalas also obtained the constant $\sqrt{2}$ of Theorem~\ref{thm:sqrt2}, 
and proved that, in fact, this constant is the best possible. 
That work contains an alternative proof of Proposition~\ref{prop:Tcontraction} 
(see \cite[Lemma 4.1]{Tsi21}). 
In his other recent preprint \cite{Tsi21_A}, 
he showed that the corresponding constant for the class $\BC_{1,r}$ is at least $2$, 
which, in particular, provides an alternative proof of the fact that 
$\BC_\alpha$ is strictly smaller than $\BC_{1,r}$. 

\begin{theorem}\label{thm:inclusionCalphaCbeta}
Let $\be(t)=(1-t)(t-s^2)$ for some $0<s<1$. 
Then $\BC_\al\ss\BC_\be$ if and only if $s\le r$. 
\end{theorem}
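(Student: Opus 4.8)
The plan is to reduce the whole equivalence to a single algebraic identity between the two hereditary expressions. Subtracting them, the quartic terms $-T^{*2}T^2$ cancel and only the coefficients of $T^*T$ and of $I$ differ, so that
\[
\be(T^*,T)-\al(T^*,T)=(r^2-s^2)(I-T^*T).
\]
This rewrites the comparison of the two classes purely in terms of the sign of $r^2-s^2$ and of the operator $I-T^*T$, the latter being controlled by the contractivity already available for $\BC_\al$.

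For the sufficiency, suppose $s\le r$ and take $T\in\BC_\al$. By Proposition~\ref{prop:Tcontraction}, $T$ is invertible and $\norm{T}\le1$, hence $I-T^*T\ge0$. Since $r^2-s^2\ge0$, the term $(r^2-s^2)(I-T^*T)$ is non-negative, and adding it to $\al(T^*,T)\ge0$ in the displayed identity gives $\be(T^*,T)\ge0$. Together with invertibility of $T$, this yields $T\in\BC_\be$, proving $\BC_\al\ss\BC_\be$.

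For the necessity I would argue by contraposition, producing a scalar counterexample when $s>r$. Regarding the number $r$ as a $1\times1$ operator, one computes $\al(r,r)=(1-r^2)(r^2-r^2)=0$ and $\be(r,r)=(1-r^2)(r^2-s^2)$. The first value is non-negative and $r$ is invertible, so $r\in\BC_\al$; the second is strictly negative as soon as $s>r$, so $r\notin\BC_\be$. Hence $\BC_\al\not\ss\BC_\be$, which completes the equivalence. I do not anticipate a genuine obstacle here: the only point requiring care is that the input $I-T^*T\ge0$ holds for every member of $\BC_\al$, and this is exactly the content of Proposition~\ref{prop:Tcontraction}; everything else is the elementary identity above and one scalar evaluation.
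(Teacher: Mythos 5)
Your proposal is correct, and its sufficiency half follows a genuinely different route from the paper's. The identity $\be(T^*,T)-\al(T^*,T)=(r^2-s^2)(I-T^*T)$ is valid (the quartic terms cancel, and the linear and constant terms combine exactly as you say), and together with $\norm{T}\le 1$ from Proposition~\ref{prop:Tcontraction} it gives $\be(T^*,T)\ge0$ at once when $s\le r$; note only that invertibility of $T$ is part of the definition of $\BC_\al$, not a conclusion of that proposition---a harmless slip in your phrasing. The paper instead derives the inclusion from the model theorem (Theorem~\ref{thm:main-b}): it reduces the question to checking that the model operator $M_z^t$ on $H^2_r(\B,\E)$, i.e.\ the backward shift on $\ell^2_r(\E)$, satisfies the $\be$-inequality, which is done by an explicit coefficient computation. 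Your argument is more elementary and self-contained---it needs only the contractivity proposition, not the model machinery---and it even sidesteps a point the paper's proof leaves implicit, namely that the subnormal summand $S$ appearing in the model must also be verified to lie in $\BC_\be$. What the paper's route buys is mainly thematic: the theorem is placed in the section on consequences of the model, and its proof illustrates how the model reduces statements about the whole class $\BC_\al$ to computations with concrete shift operators. For necessity the two arguments are essentially identical: the paper tests $rU$ for a unitary $U$, you test the scalar $r$, and both come down to the same evaluation $(1-r^2)(r^2-s^2)<0$ when $s>r$.
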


\begin{proof}
Let $U$ be a unitary operator. 
Using \eqref{eq:normDx}, note that $rU$ is in $\BC_\be$ if and only if 
\[
(1-r^2)(r^2-s^2)=(1+s^2)r^2-r^4-s^2\ge0. 
\]
Hence, $\BC_\al\ss\BC_\be$ implies $s\le r$. 
Now suppose that $s\le r$. Using the model theorem for operators in $\BC_\al$, 
it remains to prove that $M_z^t$ acting on the space $H_r^2(\B,\E)$ is in $\BC_\be$, 
for any Hilbert space $\E$. Equivalently, we want to prove that the backward shift $B$ 
acting on $\ell_r^2(\E)$ is in $\BC_\be$. For any sequence $f=\{f_n\}_{n\in\Z}$ we have 
\[
(1+s)^2\norm{Bf}^2-\norm{B^2f}^2-s^2\norm{f}^2=
\norm{f_0}^2+(r^2-s^2)\sum_{n=-\infty}^{-1}r^{-2n-2}\norm{f_n}^2,
\]
which clearly is non-negative. 
Therefore the statement follows using \eqref{eq:normDx} again. 
\end{proof}

%-----------------------
\section*{Acknowledgments}
%-----------------------

The first author was supported by 
National Science Centre, Poland grant UMO-2016/21/B/ST1/00241.
The second author acknowledges partial support by
Spanish Ministry of Science, Innovation and
Universities (grant no. PGC2018-099124-B-I00) and
the ICMAT Severo Ochoa project SEV-2015-0554 of the Spanish Ministry of Economy and
Competitiveness of Spain and the European Regional Development
Fund, through the ``Severo Ochoa Programme for Centres of Excellence
in R$\&$D''.
The second author also acknowledges
financial support from the Spanish Ministry of 
Science and Innovation, through the ``Severo Ochoa Programme for 
Centres of Excellence in R$\&$D'' (SEV-2015-0554) and from the Spanish 
National Research Council, through the ``Ayuda extraordinaria a 
Centros de Excelencia Severo Ochoa'' (20205CEX001).

%-----------------------
\bibliographystyle{siam}
\bibliography{biblio_Annulus}

\end{document}